\theoremstyle{definition}
\newtheorem{thm}{Theorem}
\theoremstyle{definition}
\theoremstyle{definition}
\newtheorem{defi}{Definition}
\theoremstyle{remark}
\theoremstyle{definition}
\newcommand{\R}{\mathbb{R}}
\newcommand{\parlengths}{\setlength{\parindent}{0pt}}
\begin{document}
\date{\today}

\pdfbookmark[1]{CREATING SUBDIVISION RULES FROM POLYHEDRA WITH IDENTIFICATIONS}{user-title-page}

\title{Creating subdivision rules from polyhedra with identifications}

\author{Brian Rushton}
\address{Department of Mathematics, Brigham Young University, Provo, UT 84602, USA}
\email{lindianr@gmail.com}

\begin{abstract}
Cannon, Swenson, and others have proved numerous theorems about subdivision rules associated to hyperbolic groups with a 2-sphere at infinity. However, few explicit examples are known. We construct an explicit subdivision rule for many 3-manifolds from polyhedral gluings. The manifolds that satisfy the conditions include all closed manifolds created from right-angled hyperbolic polyhedra, as well as many 3-manifolds with toral or hyperbolic boundary.
\end{abstract}

\maketitle\parlengths
\section{Introduction}

A long-standing conjecture of Cannon is that every Gromov hyperbolic group with a 2-sphere at infinity is a closed hyperbolic 3-manifold; in other words, it acts by isometries on hyperbolic 3-space, and its quotient is a closed manifold \cite{Combinatorial}. This is true for Gromov hyperbolic groups that are already known to be 3-manifold groups by the Geometrization conjecture, proved by Perelman \cite{Perelman}. However, it is not at all obvious that hyperbolic groups with a 2-sphere at infinity correspond to any manifold at all, and this is the reason the conjecture remains unsolved.

One approach, adopted by Cannon, Floyd, and Parry, among others, is to study subdivision rules \cite{subdivision}. All Gromov hyperbolic groups with a 2-sphere at infinity have a subdivision rule on the sphere \cite{hyperbolic}. A subdivision rule is a way of dividing the sphere into a tiling, or cell structure, with a recursive formula for dividing each tile into smaller tiles. The most famous examples include barycentric subdivision and hexagonal refinement, where a triangle is chopped up into smaller triangles, which are chopped up into smaller tiles, etc. Cannon has shown that, if a subdivision rule for a group is conformal (meaning that tiles don't get too distorted in the long run), then the group must be a hyperbolic 3-manifold group \cite{Combinatorial}. However, it has proven difficult to determine if a subdivision rule is conformal or not. Cannon and Swenson have proven the converse, i.e. that a hyperbolic 3-manifold groups have a conformal subdivision rule \cite{hyperbolic}.

One difficulty is the lack of examples. In \cite{subdivision}, Cannon, Floyd, and Parry describe a subdivision rule arising from a hyperbolic 3-orbifold with three tile types, a pentagon, a triangle, and a quadrilateral. However, no other examples have been published. On the other hand, explicit subdivision rules have been found for all non-split, prime alternating links \cite{linksubs}, most of which are finite-volume hyperbolic 3-manifold groups. However, none of these are Gromov hyperbolic.

Our goal in constructing these subdivision rules is to shed light on Cannon's conformality. Currently, the only ways of determining whether a subdivision rule is conformal or not require us to check infinitely many tilings. It seems likely that there is a simple way of telling if a subdivision rule is conformal or not just from the combinatorial structure of the tile types. By creating many examples, we can hope to explain why some tilings are conformal and others are not.

In this paper, we construct an explicit subdivision rule for a large class of 3-manifold groups, the majority of which will be hyperbolic 3-manifold groups, some closed, some finite-volume, and some of infinite volume. We start by constructing a replacement rule. A replacement rule is different from a subdivision rule. Both give a recursive way of constructing one tiling from another, but a subdivision rule requires the new tiling to include the old one as a subset, while a replacement rule does not. We then explain how to convert these replacement rules into subdivision rules in a large number of cases. As an example of our method, the tilings in Figures \ref{CircleDod} and \ref{CircleDodBig} correspond to the same manifold that Cannon, Floyd and Parry studied in \cite{subdivision}; the tiling in their paper is different from the one we obtain, but strongly related.

\begin{figure}
\begin{center}
\scalebox{.8}{\includegraphics{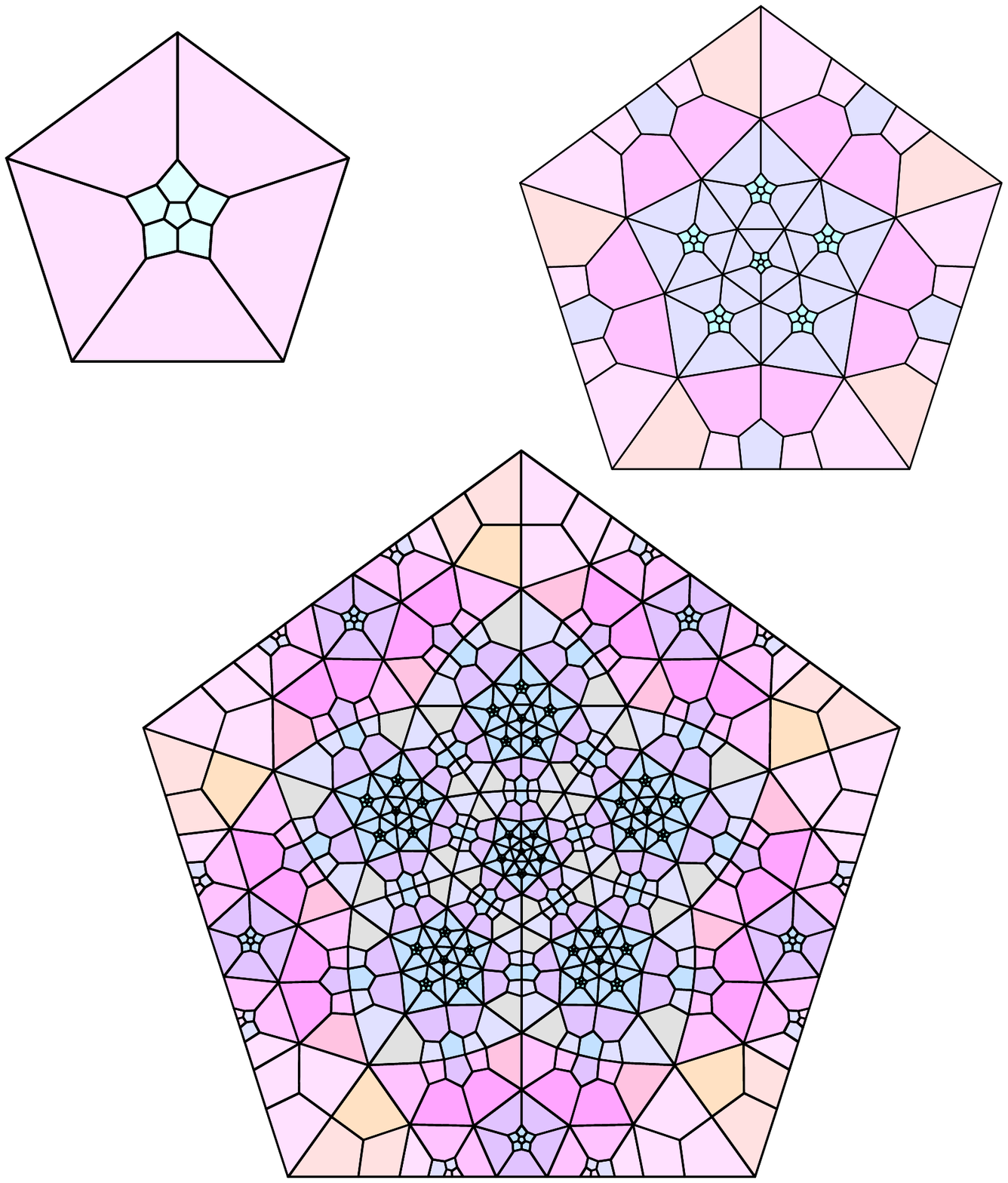}} \caption{The first three subdivisions of a tile type for a hyperbolic orbifold.}
\label{CircleDod}
\end{center}
\end{figure}

\begin{figure}
\begin{center}
\scalebox{.8}{\includegraphics{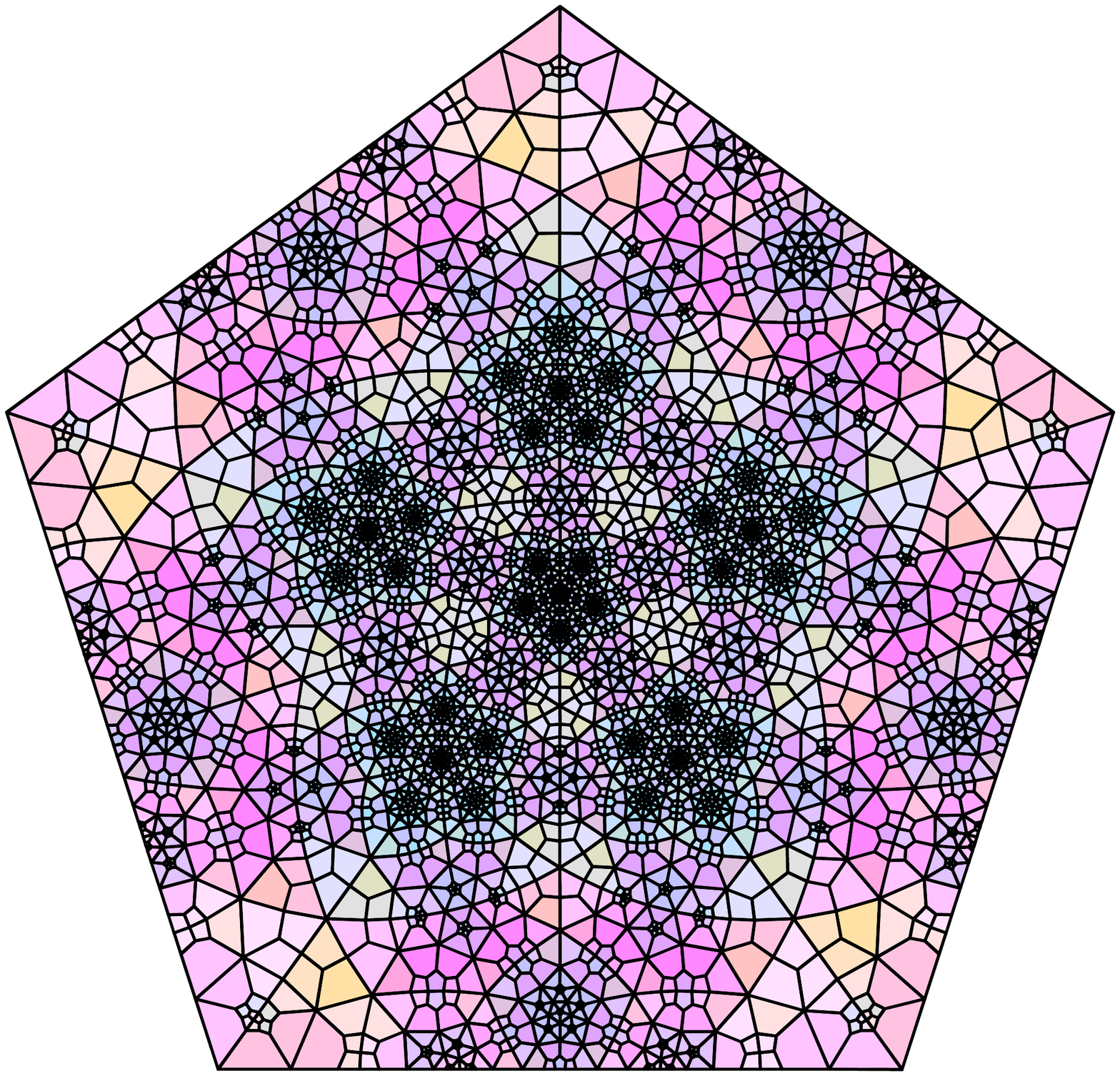}}\caption{The fourth subdivision of a tile type for the same hyperbolic orbifold.}
\label{CircleDodBig}
\end{center}
\end{figure}

\section{Preliminaries}

To associate a subdivision rule to a manifold (even a non-hyperbolic one), we create a spherical space at infinity, similar to that of word-hyperbolic groups. To do so, we approximate spheres in $\R^3$ by taking the boundary of polyhedra built up from fundamental domains.  More specifically, we let B(0) be  a single fundamental domain, and let S(0) be its boundary graph.  The structure of S(0) gives us a tiling of $S^2$.  Now, let $B(1)$ be formed from $B(0)$ by attaching polyhedra to all its exposed faces, and let $S(1)$ be its boundary, and so on.  This defines a sequence of tilings of the sphere, which defines a combinatorial structure on the space at infinity. Note $S(n)$ must be a topological sphere for all $n$.

We now define the various terms we use in this paper.

\begin{defi}\label{PolyDef} We will, for convenience, define a \textbf{polyhedron} as a topological ball with a cell structure on the boundary, where all vertices have valence three or more, all faces have at least three edges, and any two faces that intersect do so in a single edge or a vertex.
\end{defi}

\begin{defi} A \textbf{fan} is a chain $a_1,a_2,...,a_n$ of faces in a polyhedron surrounding a single vertex $v$, where $a_i\cap a_{i+1}$ contains an edge coming out of $v$ for $1\leq i \leq n+1$. If $a_1\cap a_n$ does not contain an edge coming out of $v$, then $a_1$ and $a_n$ are called the ends of the fan. If $a_1\cap a_n$ does contain such an edge, then the fan $a_1,...,a_n$ is the star of the vertex. We consider a single face to be a fan of size 1. See Figure \ref{Fans} for examples of fans.
\end{defi}

\begin{figure}
\begin{center}
\scalebox{.8}{\includegraphics{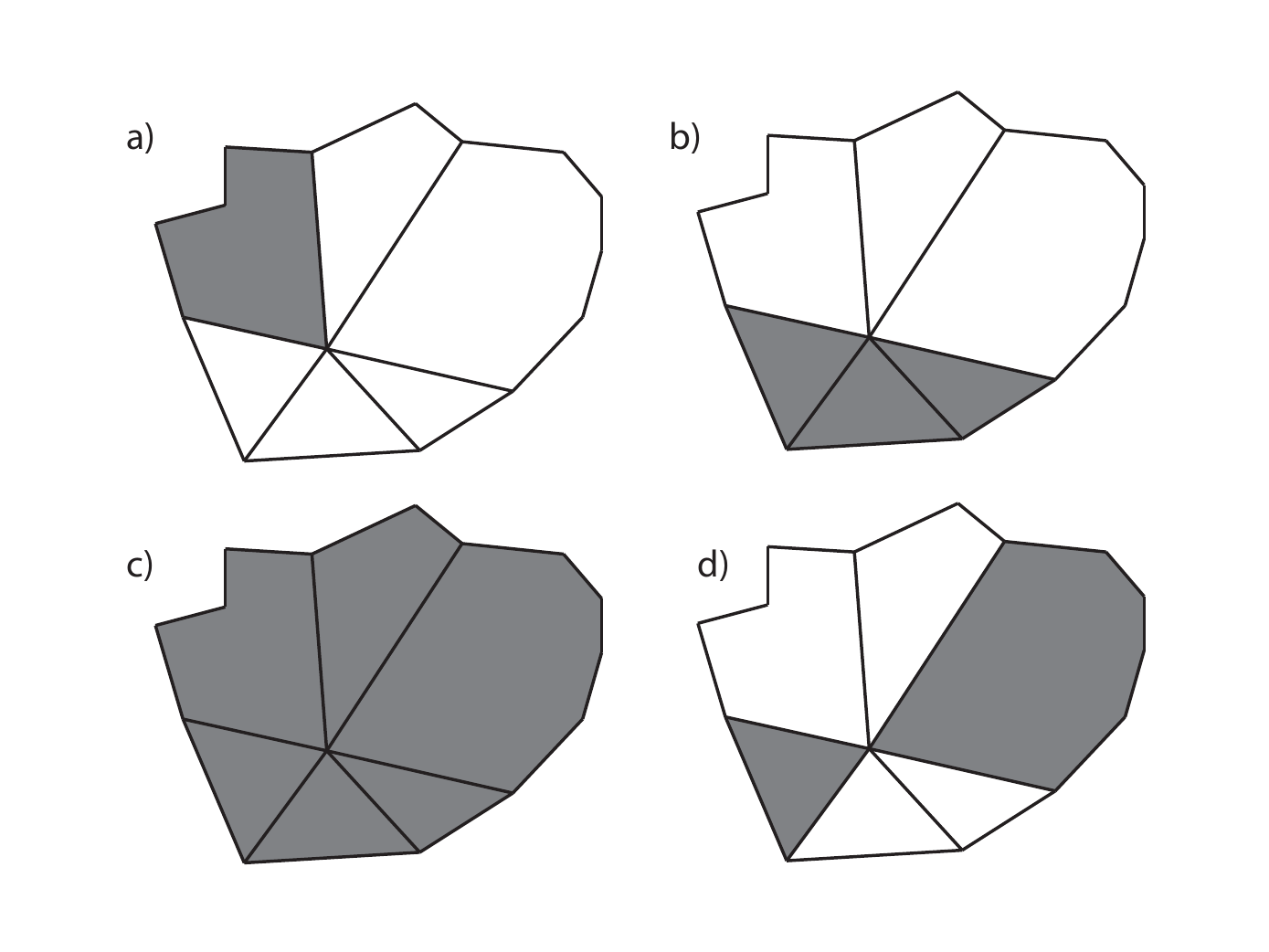}} \caption[Examples of fans.]{The shaded portions in a,b, and c are all examples of fans, while the shaded portion in d is not.}
\label{Fans}
\end{center}
\end{figure}

\begin{defi}\label{SpreadOut}
A polyhedron is \textbf{spread out} if, given any face $A$ and any fan $B$ not containing $A$, $A\cap B$ is a vertex, one edge, or two contiguous edges.
\end{defi}

Among the platonic solids, the cube, dodecahedron, octahedron, and icosahedron are spread out. The tetrahedron is not; if we take our fan to be the star of a vertex, all three of its edges intersect the remaining face.

This is analogous to alternating links. Alternating links have a well-known checkerboard polyhedral decomposition, where the boundary is an alternating diagram for the link. All hyperbolic alternating links have an alternating diagram that is reduced, non-split, and prime, and these properties are very similar to the spread-out condition above.

Andreev's theorem \cite{Andreev} (later extended by Rivin and others \cite{Rivin}) implies that all convex hyperbolic polyhedra with dihedral angles equal to $\frac{\pi}{2}$ at every edge is spread out.

We will need restrictions on edge cycle lengths.

\begin{defi}When polyhedra are glued together, the edges are identified under the gluing maps. For a given edge $e$, the \textbf{edge cycle length} of $e$ is the size of the equivalence class containing $e$.
\end{defi}

In the universal cover of a manifold, the edge cycle length turns out to be the number of distinct fundamental domains surrounding lifts of the edge. Thus, if a polyhedral fundamental domain for a hyperbolic 3-manifold has dihedral angle $\frac{2\pi}{n}$ at an edge $e$, the edge cycle length of $e$ is $n$. We are now ready to state the theorem.

\section{Existence of replacement rules}

\begin{thm}\label{BigTheorem}
Let $M$ be a manifold (possibly with boundary) that can be decomposed into polyhedra $P_1, P_2,...,P_n$. If each polyhedron is spread out, and all edge cycles have even length $\geq 4$, there exists a replacement rule for $M$. In this replacement rule, each new polyhedron is placed on exactly one fan in the previous stage of the replacement rule.
\end{thm}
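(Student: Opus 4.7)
The plan is to build the replacement rule by induction on $n$, working in the universal cover $\widetilde{M}$ of $M$. At stage $n$ I maintain the invariant that $B(n) \subset \widetilde{M}$ is a topological $3$-ball tiled by fundamental polyhedra, whose boundary sphere $S(n)$ carries the induced cellular structure. The base case is immediate: take $B(0)$ to be any lift of any $P_i$, so that $S(0) = \partial B(0)$ is a polyhedral $2$-sphere.

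For the inductive step, let $\mathcal{Q}_n$ denote the collection of polyhedra in $\widetilde{M}$ that share at least one face with $B(n)$ but are not themselves contained in $B(n)$, and for each $Q \in \mathcal{Q}_n$ let $F(Q) \subset \partial Q$ be the union of faces of $Q$ identified with faces of $S(n)$. Define
\[
B(n+1) \;=\; B(n) \;\cup\; \bigcup_{Q \in \mathcal{Q}_n} Q,
\]
and let the combinatorial rule attach, for each $Q \in \mathcal{Q}_n$, a fresh copy of $Q$ to $B(n)$ along $F(Q)$. To know this is a valid replacement rule of the claimed form, three things must be checked:
\begin{enumerate}
\item[(i)] the sets $F(Q)$ partition the exposed faces of $S(n)$;
\item[(ii)] each $F(Q)$ is a fan of $\partial Q$;
\item[(iii)] $B(n+1)$ is again a topological ball, so $S(n+1)$ is a $2$-sphere.
\end{enumerate}

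Item (i) is automatic, since each exposed face has a unique exterior polyhedron on its outer side. Item (iii) follows from (ii) together with the spread out hypothesis on $Q$: a fan is a closed disk in $\partial Q$, so attaching $Q$ along $F(Q)$ is a ball-along-disk attachment, and spread outness guarantees that no face of $Q$ outside $F(Q)$ wraps back and is re-identified with $F(Q)$ in a pathological way, since any such face meets the fan $F(Q)$ in at most two contiguous edges.

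The core of the argument, and the main obstacle, is item (ii). I would fix $Q \in \mathcal{Q}_n$, choose a vertex $v$ lying in $F(Q)$, and argue that every face of $F(Q)$ lies in the star of $v$ and forms a consecutive chain there. The crucial combinatorial input is the even edge-cycle condition: at each edge $e$ of $Q$ incident to $v$, the polyhedra meeting a lift of $e$ in $\widetilde{M}$ form a cyclic sequence of even length $\geq 4$, and those already lying in $B(n)$ form a connected arc of that cycle (by ballness of $B(n)$), with $Q$ sitting at one end of the arc. Walking around $v$ from one face of $F(Q)$ to the next, the arc structure at each edge determines which neighboring face around $v$ belongs to $F(Q)$, while spread outness within $Q$ bounds how far such a walk can travel before leaving $F(Q)$. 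Induction on the faces around $v$ then forces $F(Q)$ to be exactly a contiguous chain around $v$, hence a fan. Once (ii) is in place, (i) and (iii) complete the inductive step, and the resulting recursive prescription for passing from $S(n)$ to $S(n+1)$ is the desired replacement rule.
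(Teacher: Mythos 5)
Your overall framing is reasonable and matches the paper's setup: build $B(n)$ in the universal cover, observe that the even edge-cycle condition gives a parity argument determining when one remaining polyhedron must be glued over an edge, and reduce everything to showing that the attachment locus $F(Q)$ of each new polyhedron is a fan. Items (i) and (iii) are essentially bookkeeping, and you correctly identify (ii) as the core. But your argument for (ii) is where the real content of the theorem lives, and as written it has a genuine gap: you \emph{assume} that there is a single vertex $v$ such that all faces of $F(Q)$ lie in the star of $v$, and then ``walk around $v$.'' Nothing in your sketch rules out the dangerous configuration in which $F(Q)$ is a chain of faces $f_1, f_2, f_3$ with $f_1\cap f_2$ and $f_2\cap f_3$ edges that do \emph{not} share a vertex --- a ``strip'' rather than a fan (the paper's image is pizza slices reassembled after a $180^\circ$ rotation). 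The phrase ``the arc structure at each edge determines which neighboring face around $v$ belongs to $F(Q)$'' is precisely the assertion to be proved, not an argument for it, and spread-outness of $Q$ alone cannot supply it, because whether the loaded edges of $S(n)$ bounding $F(Q)$ share a vertex is a property of the tiling $S(n)$, built from many polyhedra glued at earlier stages, not of $Q$.

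The paper handles exactly this point with a cross-stage induction that your local, single-stage analysis cannot replace. It maintains two invariants on $S(n)$: every face with loaded edges is a loaded or half-loaded wedge (at most two loaded edges, and if two then contiguous --- this is where spread-outness enters), and there are no \emph{skew edges}, i.e.\ no edge whose two endpoints carry loaded edges bordering different faces. The second invariant is what forbids the strip configuration, and proving it is nontrivial: a burdened skew edge at stage $n+1$ is traced back to a skew edge at stage $n$ (induction), while an unburdened skew edge is excluded by a separate combinatorial argument showing the new polyhedron would have to consist of a fan plus two triangles sharing an edge, which contradicts the spread-out condition and the polyhedron axioms after a short case analysis on the valence of the central vertex. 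Your proposal contains no analogue of the skew-edge invariant or of the unburdened-case analysis, so the inductive step ``$F(Q)$ is a fan'' does not go through as sketched. To repair it you would need to formulate and propagate an invariant about how loaded edges of $S(n)$ sit relative to one another (equivalently, prove the no-skew-edge statement), not merely examine edge cycles at the edges incident to a chosen vertex of $Q$.
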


\begin{proof}
Let $S(n)$ represent the tiling given by gluing $n$ layers of polyhedra to a base polyhedron. Note that in $S(0)$, all exposed faces are fans consisting of single faces. Also, all edges border fans of size 1.

As we glue on more and more polyhedra, each edge will be buried. This happens when we glue a number of polyhedra adjacent to it equal to its cycle length. All edges in $S(0)$ are adjacent to only one polyhedron; we say that these are \textbf{unburdened} edges. As we place polyhedra on the faces to either side, we get closer to closing up the cycle of polyhedra around the edge; we say this edge is now a \textbf{burdened} edge. More precisely, a burdened edge is any edge adjacent to more than one polyhedron. Because the edge cycle has even length and all edges start out adjacent to a single polyhedron, when we glue a polyhedron on either side of the edge at each stage, it will eventually only have one polyhedron remaining to glue on. If we continued to glue polyhedra on either side, we would exceed the cycle length; thus, we glue a single polyhedron onto both faces neighboring this edge. Because this gluing behavior is different from that of previous stages, we say the edge is a \textbf{loaded} edge. That is, a loaded edge needs only one polyhedron to complete its cycle, and we focus on this type of edge in our proof. All edges start out as unburdened, become burdened, become loaded, and then disappear as we glue a polyhedron over them. Every loaded edge is burdened (because it has more than one adjacent polyhedron).

Our goal is to show that, at every stage, every polyhedron is glued onto a single face or a larger fan. Because there are finitely many combinatorial types of fans, this will show that there is a recursive combinatorial way of constructing $S(n+1)$ from $S(n)$. Now, when a polyhedron is glued onto a face in $S(n)$, if that face has any loaded edges, the polyhedron will be glued onto those loaded edges and onto all faces on the other side of those loaded edges; and if those faces have loaded edges, the polyhedron is glued onto those as well, and so on. To show that all polyhedra are glued onto fans and only fans, we have to show that faces with loaded edges meet up as fans.

What do fans look like? Locally, at the vertex, they look like pizzas with some number of neighboring pieces missing (so that what remains is the sector of a circle). Each piece of pizza touches exactly two neighboring pieces, except for the ends of the pizza, which touch exactly one other piece of the fan.

Therefore, if a face has exactly two loaded edges that share a vertex, we call it a \textbf{loaded wedge}. Loaded wedges may group together into fans, where the ends of the fan (if there are any) have only one loaded edge. In this case, the ends are called \textbf{half-loaded wedges} and the entire fan is a \textbf{loaded fan}.  Our goal, then, is to show that all loaded faces can be grouped into loaded fans, and the first step in proving that is to prove that all loaded faces are loaded and half-loaded wedges. See Figure \ref{LoadedFan} for an example.

\begin{figure}
\begin{center}
\scalebox{.8}{\includegraphics{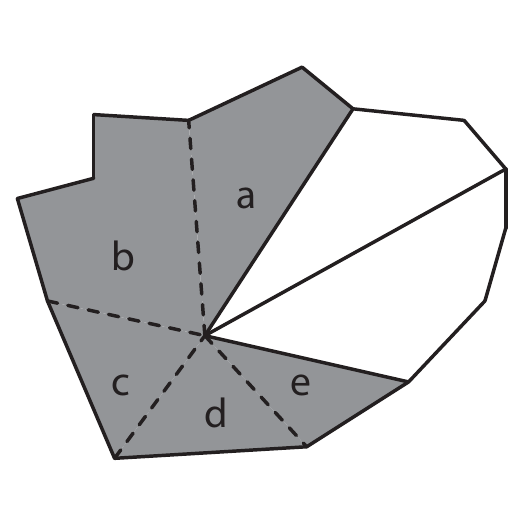}} \caption[Loaded fans and loaded wedges.]{If the shaded portion is a loaded fan, then faces b,c, and d will be examples of loaded wedges, while faces a and e will be examples of half-loaded wedges. Dashed edges are loaded, and others are not.}
\label{LoadedFan}
\end{center}
\end{figure}

As for the second step, note that we said loaded wedges \emph{may} group together into fans. Loaded wedges are, again, like pizza slices. If we rotate a pizza slice $180^o$ before putting it down, we get an unusual parallelogram of pizza instead of a sector of a circle. This shows that it is not sufficient to prove that all loaded faces are loaded or half-loaded wedges; we must also prove that they meet up in the correct way. An edge is \textbf{skew} if there are two loaded edges that intersect it, one at each vertex, and these loaded edges do not border the same face. See Figure \ref{SkewEdge} for an example of a skew edge.

\begin{figure}
\begin{center}
\scalebox{.6}{\includegraphics{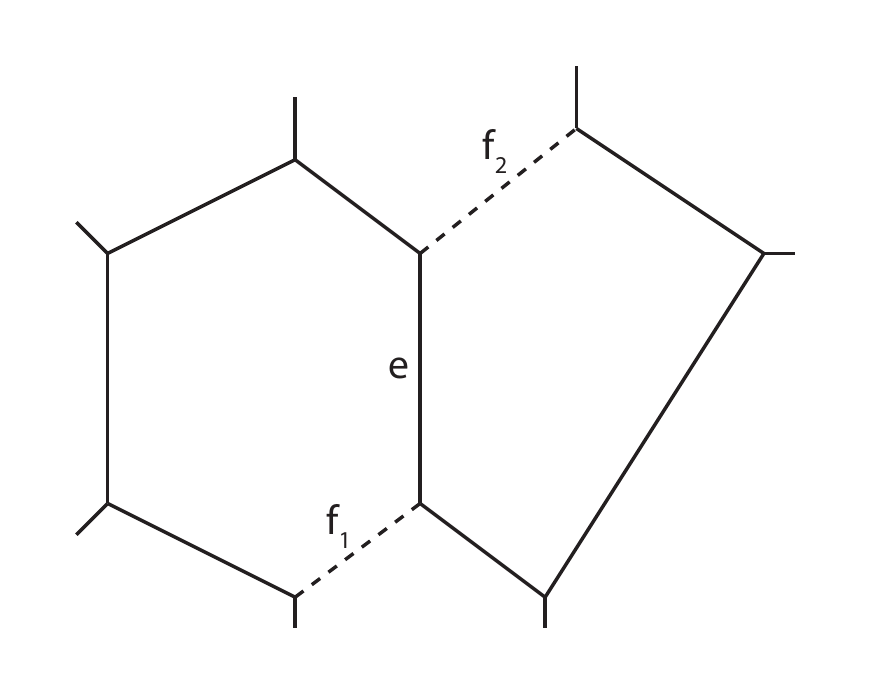}} \caption{A skew edge $e$. Here $f_1$ and $f_2$ are loaded, intersect $e$ in distinct vertices, but border different faces.}
\label{SkewEdge}
\end{center}
\end{figure}

Skew edges represent the rotated pizza slices that meet up in the wrong way. The rest of this proof will be devoted to showing that all loaded faces are loaded or half-loaded wedges, and that there are no skew edges. This will show that all loaded faces group together into fans; thus, every polyhedron that is glued on to $S(n)$ is glued on to a loaded fan, and since there are only finitely many fans on each polyhedron, this implies that there are only finitely many combinatorial types of gluings.

We proceed by induction. In $S(0)$, all edges are unloaded, and so there are no skew edges or loaded faces. Assume that $S(n)$ has no skew edges and that all faces with loaded edges are loaded or half-loaded wedges; this implies that all faces are grouped into fans. Then we form $S(n+1)$ by placing polyhedra onto fans. A fan $A$ in $S(n)$ will be replaced by several subtiles in $S(n+1)$ corresponding to unglued faces of the polyhedron. Those in the interior correspond to faces on the polyhedron that don't touch the fan $A$, and are formed of unburdened edges. Those touching a former edge of $A$ correspond to faces on the polyhedron that share an edge with the fan $A$ (where we are now considering $A$ as a subset of the polyhedron). Because our polyhedra are spread out (recall Definition \ref{SpreadOut}), these subtiles share either one edge or two contiguous edges with the fan. These edges are the only edges that are burdened, because they are the only edges of the polyhedron we are gluing on that will be adjacent to more than one polyhedron.

So, all faces in $S(n+1)$ have zero, one, or two contiguous burdened edges. This implies that if these faces have any loaded edges (which are a kind of burdened edge), the faces must be either loaded or half-loaded wedges. Thus, we have completed half of our induction.

Now, we must show that there are no skew edges. We will first show that there are no burdened skew edges.

If there is a burdened skew edge $e$ in $S(n+1)$, then by definition of skew edge (recall Figure \ref{SkewEdge}), the faces to either side each have two contiguous burdened edges. As we discussed earlier, faces with burdened edges in $S(n+1)$ correspond to faces in the new polyhedra that are adjacent to the fan we are gluing on to. Thus, if a face has two burdened edges, it must have been adjacent to a fan in two edges. Now, if a fan consists of a single face, then no other face can intersect it in two edges, by our definition of polyhedron (see Definition \ref{PolyDef}).

Thus, if a face in $S(n)$ has two burdened edges, it comes from a polyhedron glued onto a loaded fan in $S(n-1)$. Also, the two burdened edges in $S(n)$ must have touched two different faces of the loaded fan in $S(n-1)$, again by Definition \ref{PolyDef}. Moreover, the vertex they share must also have been the center vertex of the loaded fan in $S(n-1)$.

So, a burdened skew edge $e$ in $S(n)$ must come from an edge in $S(n-1)$ that is adjacent to two loaded fans, and the loaded edges of those loaded fans intersect $e$ in distinct edges. So $e$ must have been a skew edge (burdened or unburdened) in $S(n-1)$, which is impossible by our induction hypothesis. See Figure \ref{SkewInduction}.

\begin{figure}
\begin{center}
\scalebox{.8}{\includegraphics{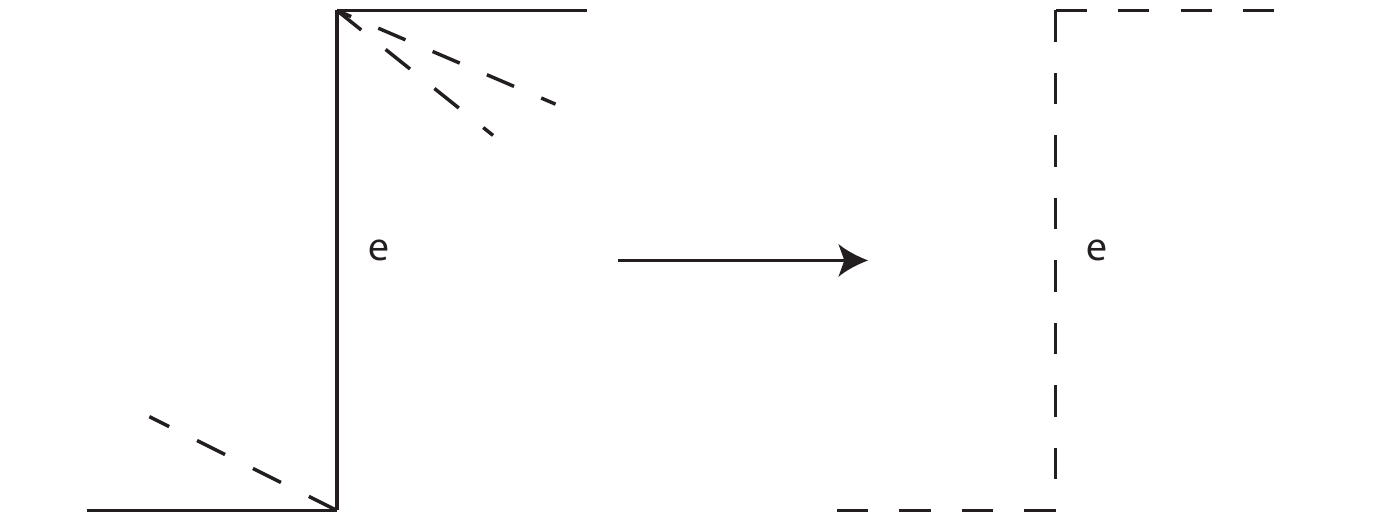}} \caption[There are no skew edges by induction.]{Burdened skew edges must come from other skew edges.}
\label{SkewInduction}
\end{center}
\end{figure}

We now show (by considering a possible counterexample) that there are no unburdened skew edges in $S(n+1)$, which completes the proof. Assume that there is an unburdened skew edge $e$. Because $e$ is unburdened, it must be in the interior of some polyhedron that was glued on to a fan $A$ in $S(n+1)$. See Figure \ref{UnburdenedSkew}. Thus, the faces on either side (call them $B$ and $C$) must also be part of the same polyhedron, and both border the fan $A$ (considered now as a subset of the polyhedron being glued on). But then $B$ intersects $A$ in an edge contiguous with $e$ and in both vertices of the interior edge $e$. Because the intersection of $A$ and $B$ is connected and is one or two edges, $B$ must be a bigon (which can't happen) or a triangle. The same applies for $C$. Thus, our polyhedra consists of a fan (namely, $A$) and two triangles sharing an edge (namely, $B$ and $C$). There can be at most four faces in the fan, as each must intersect something not in the fan (because at most two edges of each face in a fan touch other faces in the fan, and each face is a triangle or larger), and there are only four edges of $B\cup C$ intersecting the fan. If the fan is not the star of a vertex, we can add $B$ or $C$ to it, and this larger fan intersects the remaining triangle in all of its edges, a contradiction. So the fan must be the star of a vertex of valence 3 or 4. If it is of valence 3, then the star of a vertex of $e$ contains 4 tiles, and the remaining tile must intersect this star in more than 2 edges, which cannot occur. If it is of valence 4, then we can pick one vertex of $e$ and consider the fan around it consisting of $B$, $C$, and one tile of $A$. Then one of the tiles of $A$ touching the other vertex of $e$ has disconnected intersection with this fan (see Figure \ref{ContraSquares}). This is a contradiction. Thus, there are no skew edges in $S(n+1)$.

\begin{figure}
\begin{center}
\scalebox{.8}{\includegraphics{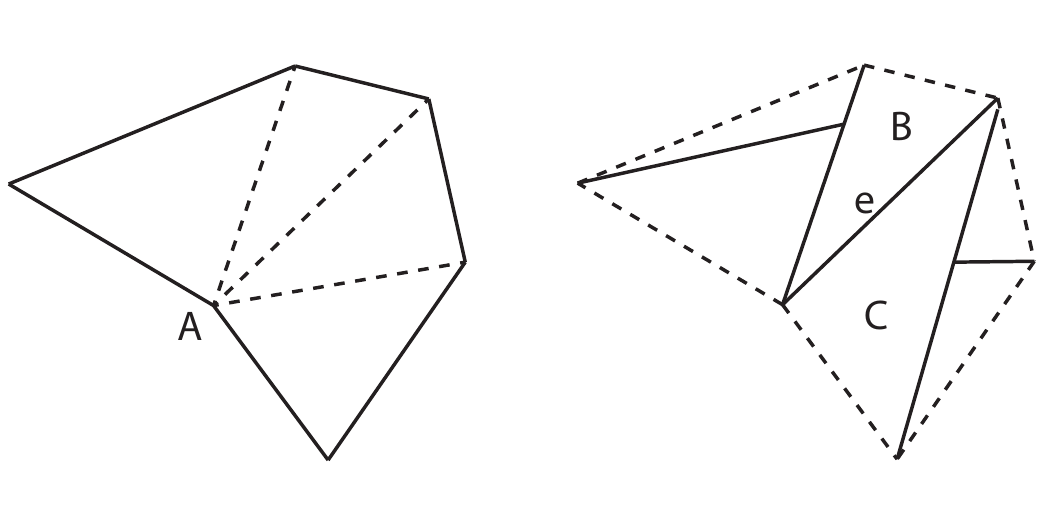}} \caption{An unburdened skew edge must look something like this.}
\label{UnburdenedSkew}
\end{center}
\end{figure}

\begin{figure}
\begin{center}
\scalebox{.9}{\includegraphics{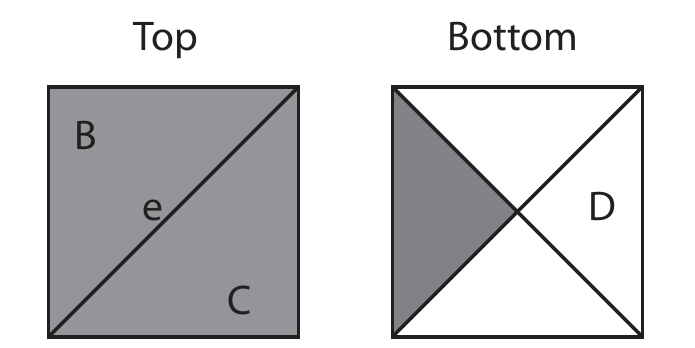}} \caption[A possible counterexample.]{Note that the fan consisting of the shaded gray region has disconnected intersection with the face D.}
\label{ContraSquares}
\end{center}
\end{figure}

Because there are no skew edges, all faces group together in fans. Since there are finitely many types of fans, we get a finite recursive algorithm for constructing the universal cover, i.e. a replacement rule.
\end{proof}

\section{Forming subdivision rules from replacement rules}

Note that Theorem \ref{BigTheorem} does not provide a way for creating a subdivision rule. However, in many cases, a subdivision rule can be created by adding extra lines. To illustrate, consider a non-hyperbolic example, the 3-dimensional torus. It has a decomposition into a single polyhedron, a cube with edge cycle lengths all equal to 4, and so has a replacement rule as described by the theorem. The three replacement types are shown in Figures \ref{TorusSquareSub}, \ref{TorusPairSub}, and \ref{TorusTripleSub}. They correspond to an unloaded face, a \textbf{loaded pair} of faces (i.e. a fan of two faces, sharing a common loaded edge), and a \textbf{loaded star} (i.e. a maximal loaded fan, containing the entire star of a vertex). By symmetry, these are the only types.

\begin{figure}
\begin{center}
\scalebox{.3}{\includegraphics{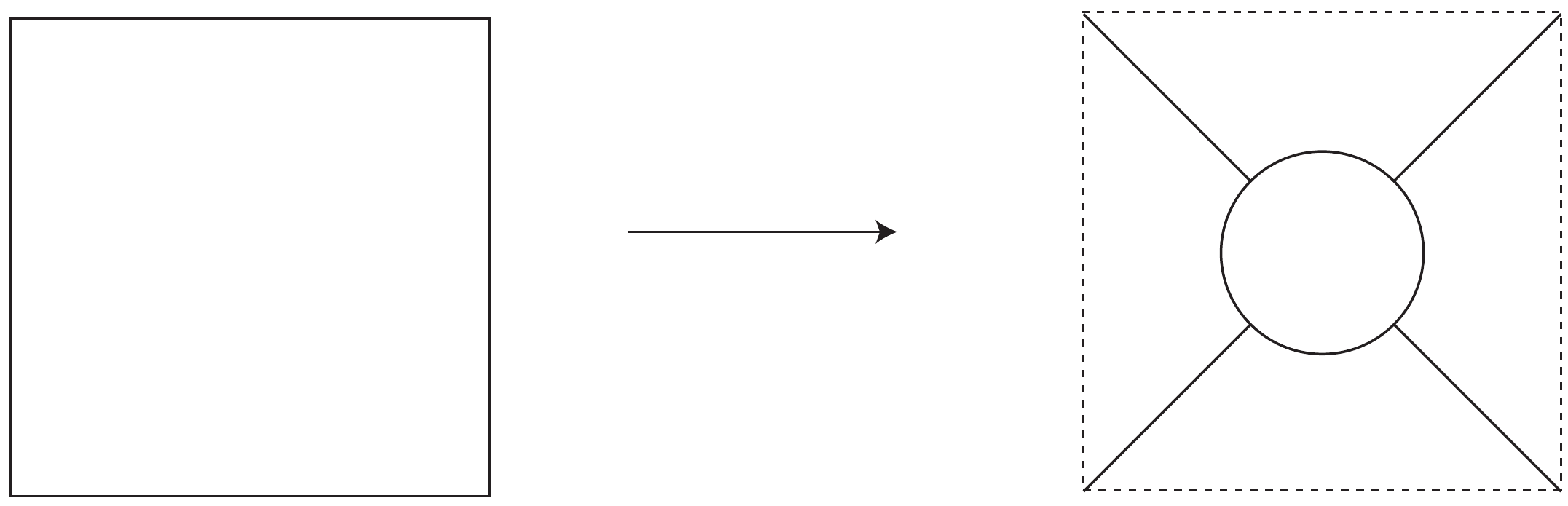}}
\caption{The replacement rule for an unloaded face.}\label{TorusSquareSub}
\end{center}
\end{figure}

\begin{figure}
\begin{center}
\scalebox{.3}{\includegraphics{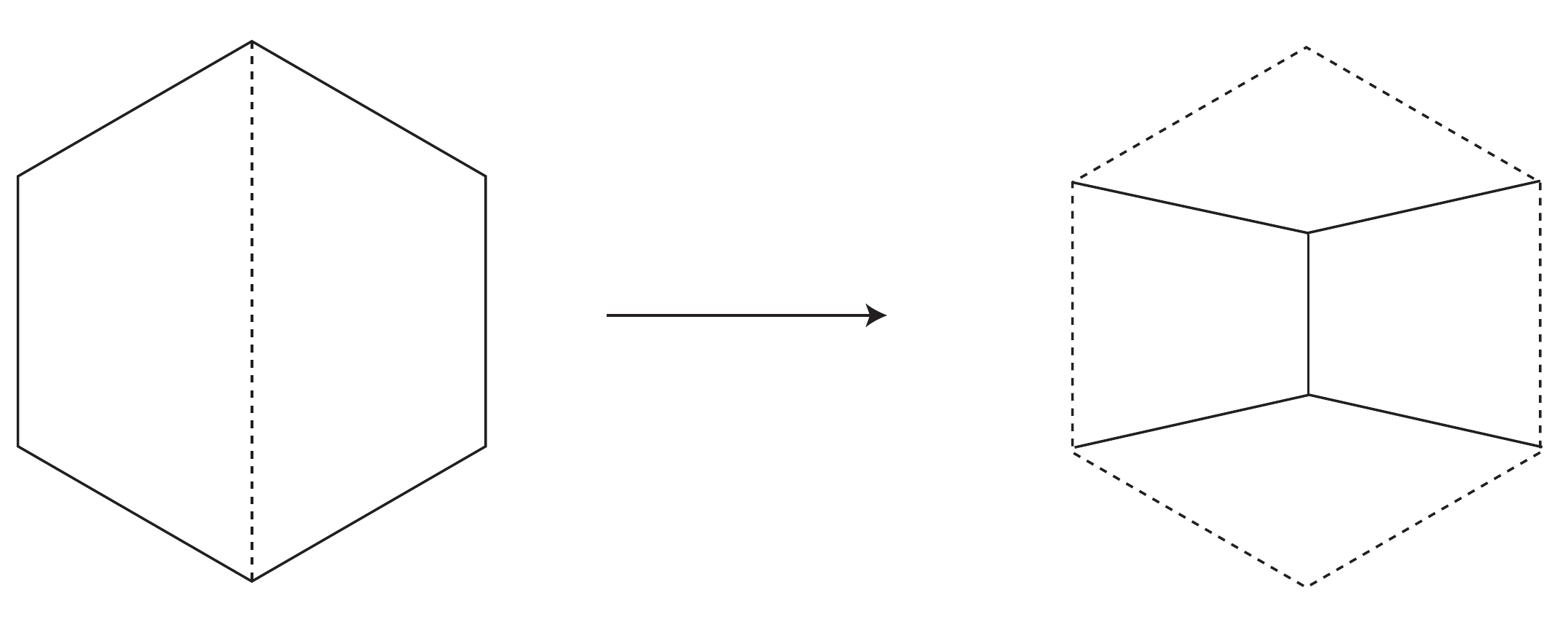}}
\caption{The replacement rule for a loaded pair.}\label{TorusPairSub}
\end{center}
\end{figure}

\begin{figure}
\begin{center}
\scalebox{.3}{\includegraphics{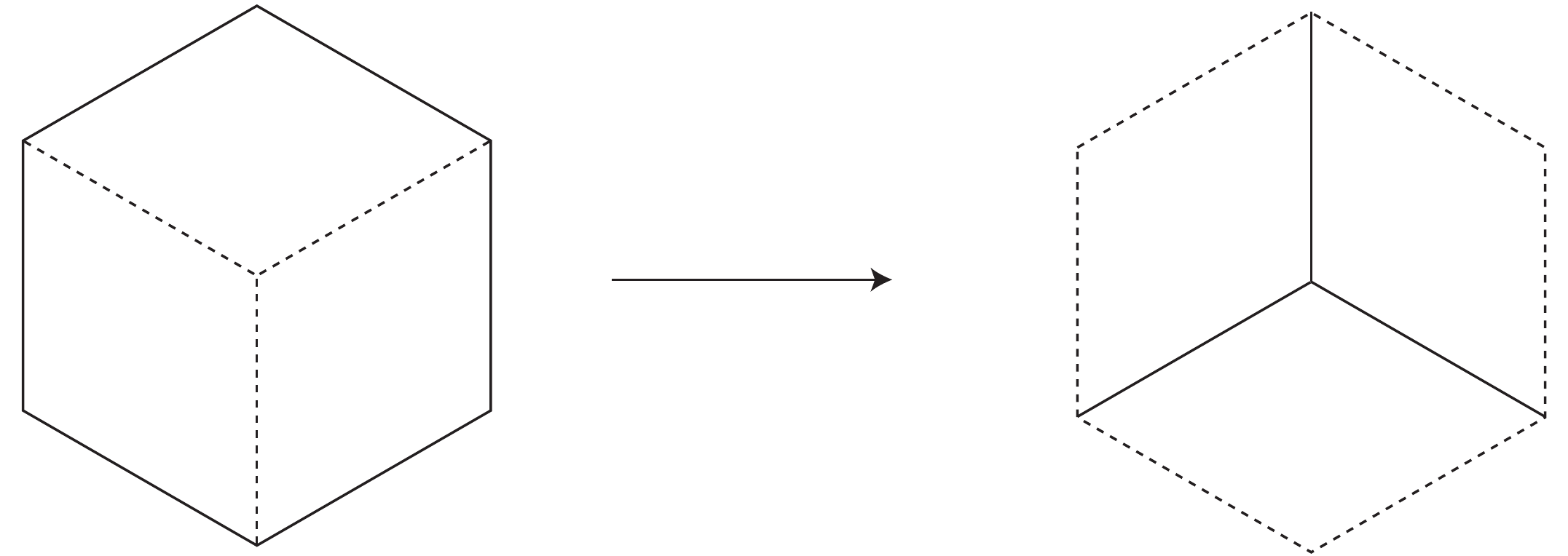}}
\caption{The replacement rule for a loaded fan.}\label{TorusTripleSub}
\end{center}
\end{figure}

Notice that this is only a replacement rule; edges are created, disappear, reappear,etc. This can be turned into a subdivision rule by a method that turns out to be very useful: adding new edges at every stage. For instance, in Figure \ref{TorusPairSub}, the center line between two squares disappears when we glue on the new cube. However, if we add a line to the new cell structure (as shown in the top half of Figure \ref{TorusAddedLines}), then the new cell structure contains the old cell structure as a subset. Thus, we have a subdivision rule.

\begin{figure}
\begin{center}
\scalebox{.6}{\includegraphics{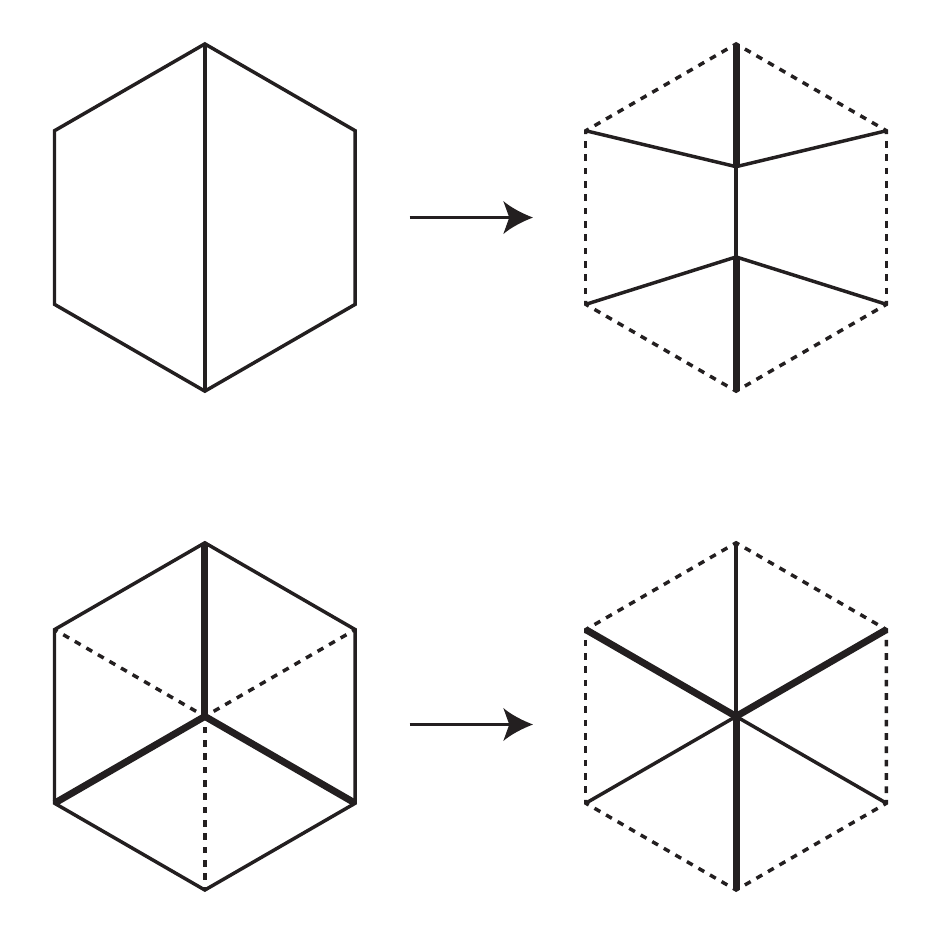}}
\caption{We can add lines to the two `loaded' tile types to get a self-consistent subdivision rule.}\label{TorusAddedLines}
\end{center}
\end{figure}

However, this divides the top and bottom squares into two triangles each; each of these are part of a loaded star, so we have to change the replacement rule for a loaded star; however, notice that adding the lines in to the loaded star on the left of Figure \ref{TorusTripleSub} gives us a hexagon divided into six `pie slices'. If we add similar lines bisecting the loaded star (as seen in Figure \ref{TorusAddedLines}), we again get a hexagon divided into six triangles; thus, the subdivision on each triangle in that hexagon is just the identity.

We summarize this in Figure \ref{TorusSubs}. Several stages of subdivision are shown in Figure \ref{CircleTorus} on page \pageref{CircleTorus}. This is a combinatorial subdivision only; the circle packed pictures are not subsets of each other, because this subdivision rule is not conformal. The connection between circle packings and conformality is explained in \cite{French}.

\begin{figure}
\begin{center}
\scalebox{.6}{\includegraphics{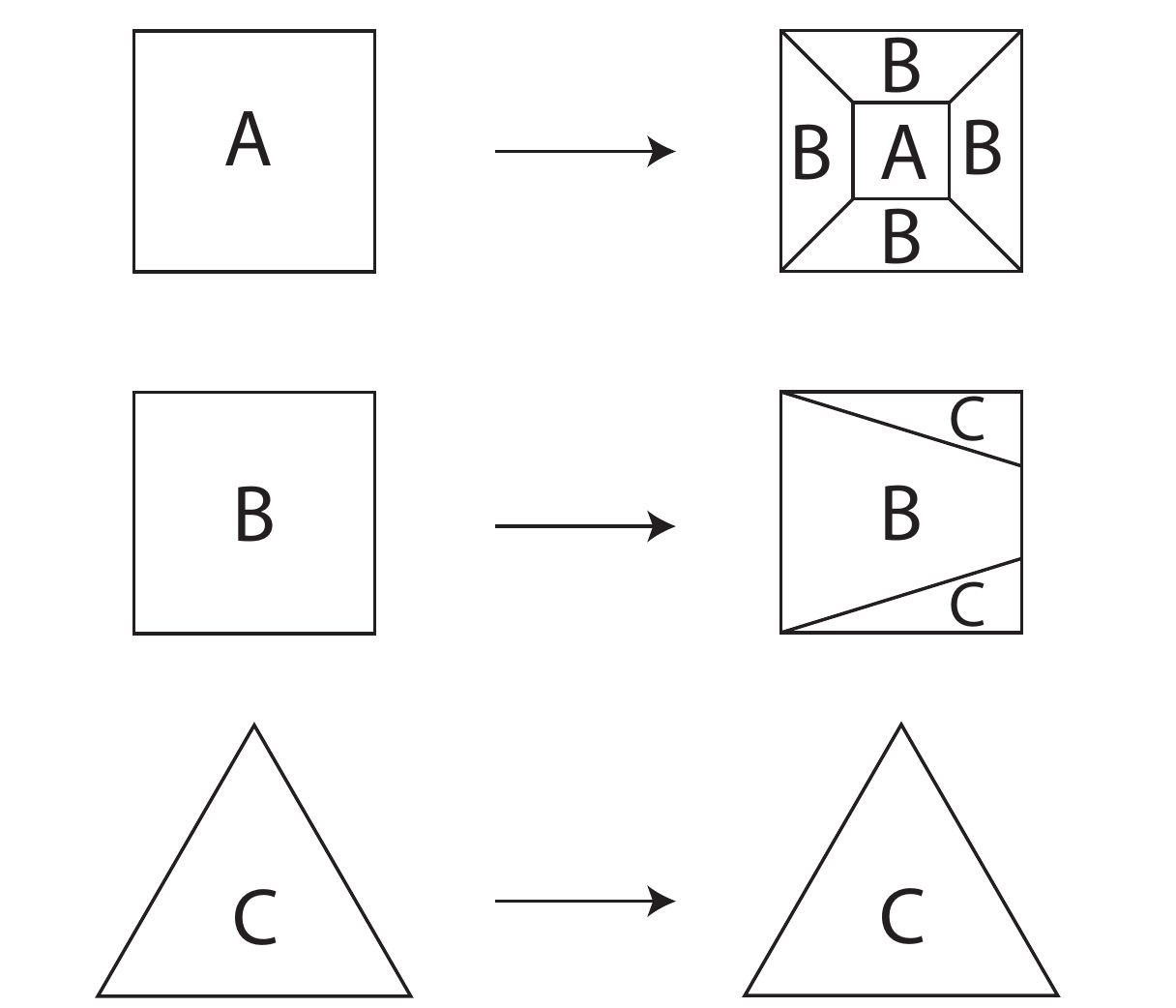}}
\caption{The replacement rule for the 3-dimensional torus.}\label{TorusSubs}
\end{center}
\end{figure}

In general, we can get a subdivision rule if we can add finitely many lines to each replacement type in a way that is self-consistent. However, self-consistency can be difficult to achieve; notice that adding lines in the loaded pair forced us to add lines in the loaded star. In more complicated subdivision rules, adding lines in one tile type can cascade and force us to add more and more lines until we have infinitely many tile types.

There are large classes of polyhedra that do have subdivision rules, however. In the discussion that follows, we will mention the edge cycle lengths of polyhedra many times. Edge cycle length in a hyperbolic manifold can correspond to dihedral angles of polyhedral decompositions. If every dihedral angle is $\frac{2\pi}{n}$, then all edge cycles have length $n$. Polyhedral decompositions with varying dihedral angles do not follow this pattern.

It will be useful to define $n$-cycles, following \cite{Rivin}. An $n$-\textbf{cycle} is a set of $n$ faces $A_1, A_2,...,A_n$ in a polyhedron such that $A_i \cap A_{i+1}$ (subscripts taken mod $n$) is a single edge for each $i$, and all such edges are pairwise disjoint.

The following two theorems give two large classes of polyhedral gluings that can be made into subdivision rules. The first always gives closed 3-manifolds; the second always gives 3-manifolds with boundary.

In the proof that follows, a \textbf{loaded vertex} will be the center vertex of a loaded star.

\begin{thm}\label{SubdivisionTheorem}
Assume a manifold $M$ is formed by gluing together convex, right angled hyperbolic polyhedra $P_1,..,P_n$. Then the replacement rules in Theorem \ref{BigTheorem} can be made into subdivision rules by adding finitely many lines to replacement tile types.
\end{thm}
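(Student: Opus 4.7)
My plan is to combine the strong combinatorial rigidity forced by the right-angled hypothesis with an explicit prescription of extra arcs to add in each type of replacement. Andreev's theorem, already cited in the preliminaries, forces every edge cycle after gluing to have length four, and (by the same theorem's combinatorial conditions) each vertex to have bounded valence. Consequently the loaded fans appearing in the proof of Theorem \ref{BigTheorem} fall into only finitely many combinatorial shapes: the trivial single face, a loaded pair (two faces sharing one loaded edge), and a loaded star (the full star of the central vertex, with all incident edges loaded). Since the $P_i$, their faces, and the fans on each face are all finite in number, the replacement rule uses only finitely many tile types to begin with.

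For each loaded fan type I would then prescribe the arcs to add. When a new polyhedron is glued onto a loaded fan $A$ with central vertex $v$, the cap (the portion of the new polyhedron's boundary that replaces the interior of $A$) buries the loaded edges of $A$, and in the star case it also buries $v$ itself. For a loaded pair with loaded edge $vw$, both endpoints lie on $\partial A$, and I would add the single arc from $v$ to $w$ across the cap. For a loaded star with buried $v$ and loaded edges $vw_1, \dots, vw_k$ (each $w_i \in \partial A$), I would introduce a new interior vertex $v'$ in the cap and add the $k$ arcs $v'w_i$, which together record the buried vertex and its incident buried edges. After these additions the old cell structure of $A$ sits inside the new cell structure as a subcomplex, so the replacement genuinely subdivides $A$, exactly as in the torus example worked out in the preceding section.

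The main obstacle is showing that this process closes on finitely many tile types. The added arcs dissect each cap face into new sub-polygons which must themselves be recognizable as tile types whose own subdivision rules are consistent. I would verify this by a case-by-case analysis, using that each new interior vertex $v'$ inherits exactly the valence of the buried $v$ it replaces, and that every inherited edge still participates in a length-four cycle, so that the local structure produced at each step still matches the finitely many fan types already catalogued. A second, more delicate consistency point is that where two loaded fans share a boundary vertex $w$, the arcs added on one side must agree with those added on the other; this should follow because the loaded state around $w$ is determined locally by the gluing data, so the arcs emanating from $w$ in $S(n+1)$ are unambiguously prescribed. I expect this consistency check, rather than the existence of the arcs themselves, to be the subtle part of the argument, and it is here that the bulk of the case-by-case verification will live.
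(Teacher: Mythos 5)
Your plan correctly identifies the right-angled combinatorics (edge cycles of length four, trivalent vertices, hence fans of size one, two, or three) and the right general strategy of prescribing extra arcs so that the old cell structure embeds in the new one. But there is a genuine gap at exactly the point you flag as ``the subtle part,'' and the invariant you propose to close it does not work. Once you add an arc from $v$ to $w$ across the cap of a loaded pair, that arc becomes part of the old cell structure at the next stage: the tiles it passes through are no longer bare fans but fans decorated with added lines, and those decorated tiles must themselves be subdivided consistently. In particular, every loaded star that actually arises in the induction carries, in addition to its three loaded edges, three added radial lines inherited from the previous stage's treatment of loaded pairs; your prescription for the star (a new interior vertex $v'$ joined to the $w_i$) reproduces only the buried loaded edges and never says how the inherited added lines are to be reproduced. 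Moreover, routing arcs ``across the cap'' arbitrarily cuts cap tiles into new shapes, and your claimed invariant (that $v'$ inherits the valence of $v$ and inherited edges lie in length-four cycles, so everything matches the fans already catalogued) is false as stated: the new types are not fans but fans with added lines, some ending at interior points (midpoints) of edges, and showing that this decorated catalogue closes up after finitely many types is the real content of the proof. The paper does this by routing the replacement arcs along existing interior edges wherever possible, allowing segments to end at midpoints of edges, and then verifying closure through an explicit case analysis (seven cases in all).

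Second, your argument never uses the conditions that actually distinguish right-angled hyperbolic polyhedra from, say, the Euclidean cube: the absence of three-cycles and four-cycles (Andreev/Rivin). These are needed to construct the connecting paths at all. For the loaded star one must connect boundary vertices by paths of interior edges that avoid the boundary tiles of the cap, and the connectivity of the complement of the boundary tiles is exactly what fails if the polyhedron has a four-cycle; the paper points out that this is where the construction breaks for the 3-torus, which nevertheless satisfies everything you invoke (trivalent vertices, edge cycle length four). Similarly, the spread-out condition is what rules out a separating tile when you connect the two endpoints of a loaded edge by interior edges in the loaded-pair case. So beyond the bookkeeping of decorated tile types, your proposal is missing the specific geometric input that makes the arc constructions possible, and with only the hypotheses you use, the statement you are implicitly proving would be false.
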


Note that all such polyhedra satisfy \cite{Rivin}:
\begin{enumerate}
\item each polyhedron $P_i$ is spread out, and
\item each vertex of each polyhedron has valence three,
\item every edge (after gluing) has edge-cycle length four, and
\item there are no three-cycles or four-cycles.
\end{enumerate}

\begin{proof}
We need only show that we can make each replacement of a tile a subdivision of a tile, while retaining finitely many tile types. Because every vertex has valence three, any edges that share a vertex also share an edge, which simplifies the combinatorics significantly (see Figure \ref{VertexProgression}). Throughout the following proof, we will refer to the replacement rule for the three torus as an example. In Figure \ref{Dodecasubs}, we give an example of a more complicated replacement rule, that of the right-angled dodecahedral orbifold mentioned earlier. We will use this replacement rule as an example as well. We proceed by cases.

\begin{figure}
\begin{center}
\scalebox{.6}{\includegraphics{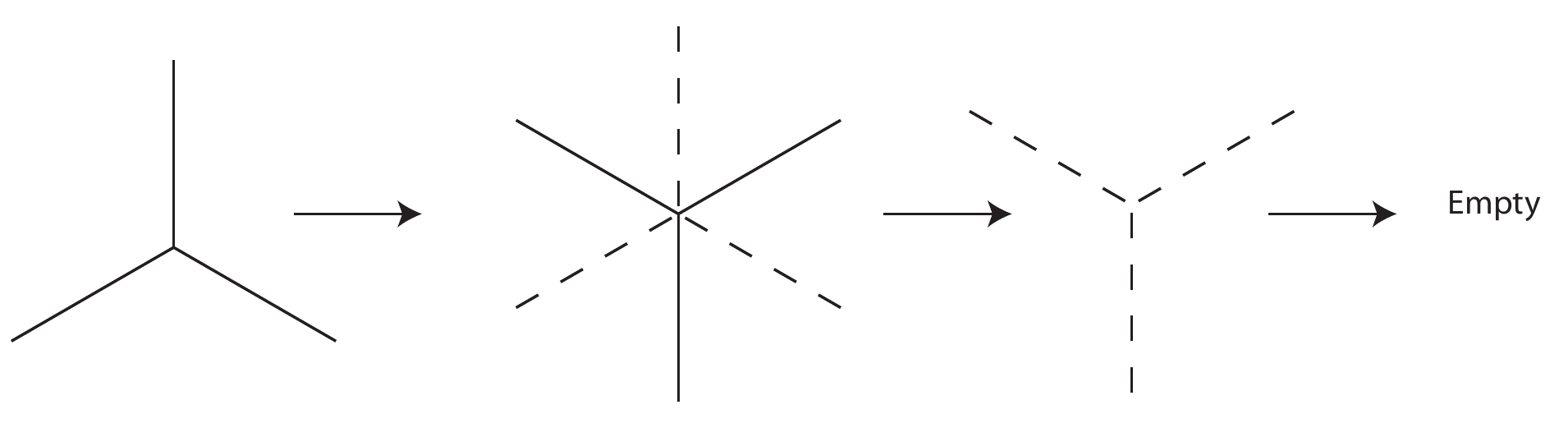}}
\caption[The replacement rule at each vertex.]{Each new vertex starts with three unburdened lines. In the next stage of replacement, three unburdened lines are added, and the old edges become loaded. The vertex is surrounded by three loaded pairs. In the next stage, there are only three edges, and the vertex is now loaded. In the next stage, the vertex and its star are entirely covered up by a new polyhedron.}\label{VertexProgression}
\end{center}
\end{figure}

\begin{figure}
\begin{center}
\scalebox{.5}{\includegraphics{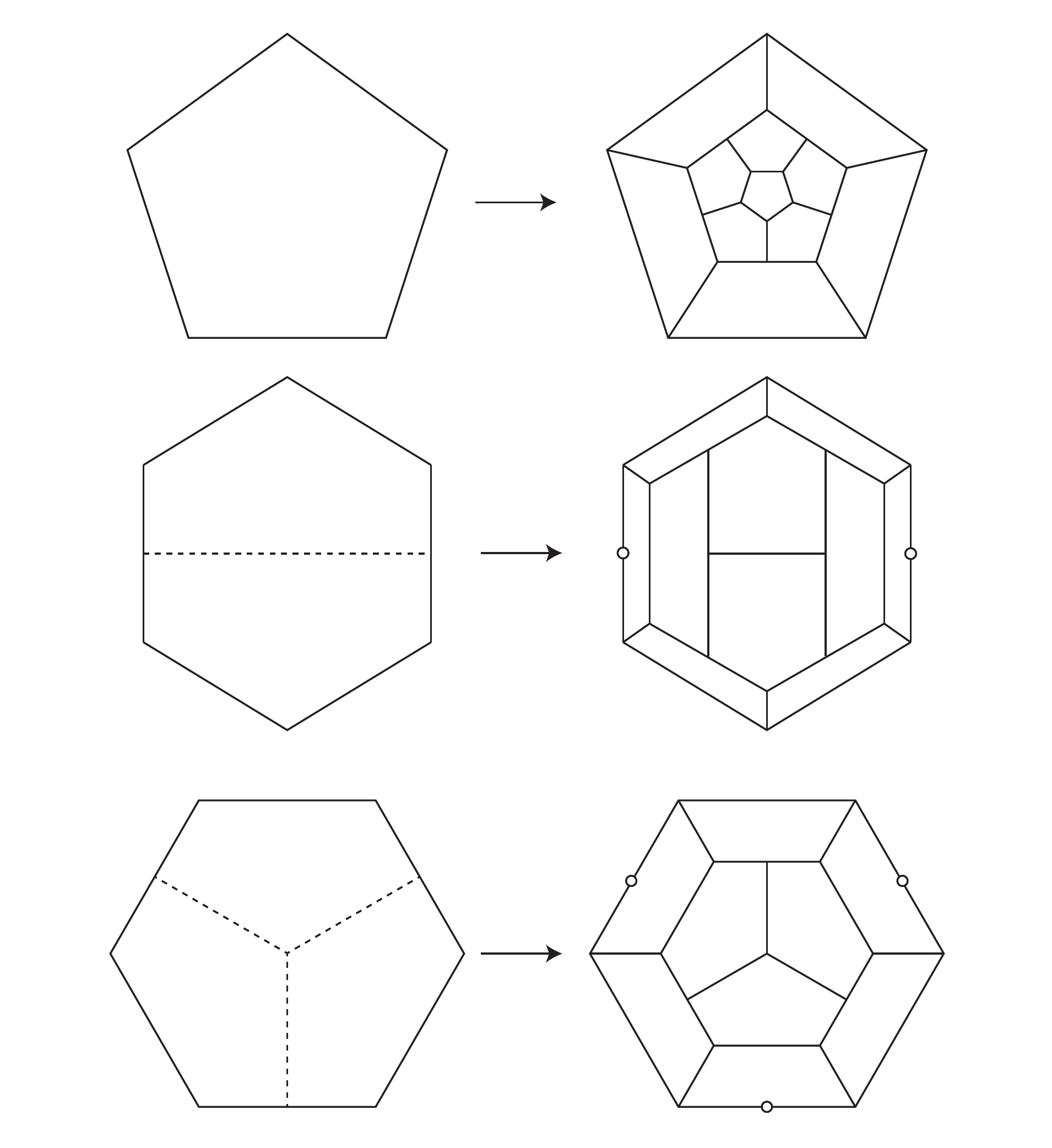}}
\caption{The three replacement types for the right-angled dodecahedral orbifold. They are fans of size one, two, and three.}\label{Dodecasubs}
\end{center}
\end{figure}

\textbf{Case I: Replacing a single unloaded tile in $S(n)$.}

The replacement rule for an unloaded tile is already a subdivision, as no edges disappear. In $S(n+1)$, the only tile types created by this replacement are more unloaded tiles from the interior (which are also case I) and loaded pairs from the boundary (which are case II).

\textbf{Case II: Replacing a loaded pair in $S(n)$.}

Recall that, in creating $S(n+1)$, we glue one polyhedron onto the entire loaded pair, which consists of two faces $A$ and $B$ sharing a common loaded edge $E$ that travels between vertices $v$ and $w$ (see Figure \ref{LabeledLoadedPair}). The cell structure of the unglued faces of the polyhedron (which we call $P^*$) replaces the cell structure of $A\cup B$. To make this replacement a subdivision rule, we find or create a path from $v$ to $w$ in $P^*$ that will take the place of $E$ (as in Figures \ref{TorusAddedLines} and \ref{DodecaPairAddedLines}). Each of these two vertices is contained in a unique tile of $P^* \subset S(n+1)$ (recall Figure \ref{VertexProgression}). We can add an edge from each vertex to an interior vertex of this tile. If the two new edges meet in the same point, we can stop, and identify these two edges with the original, loaded edge $E$.

\begin{figure}
\begin{center}
\scalebox{.6}{\includegraphics{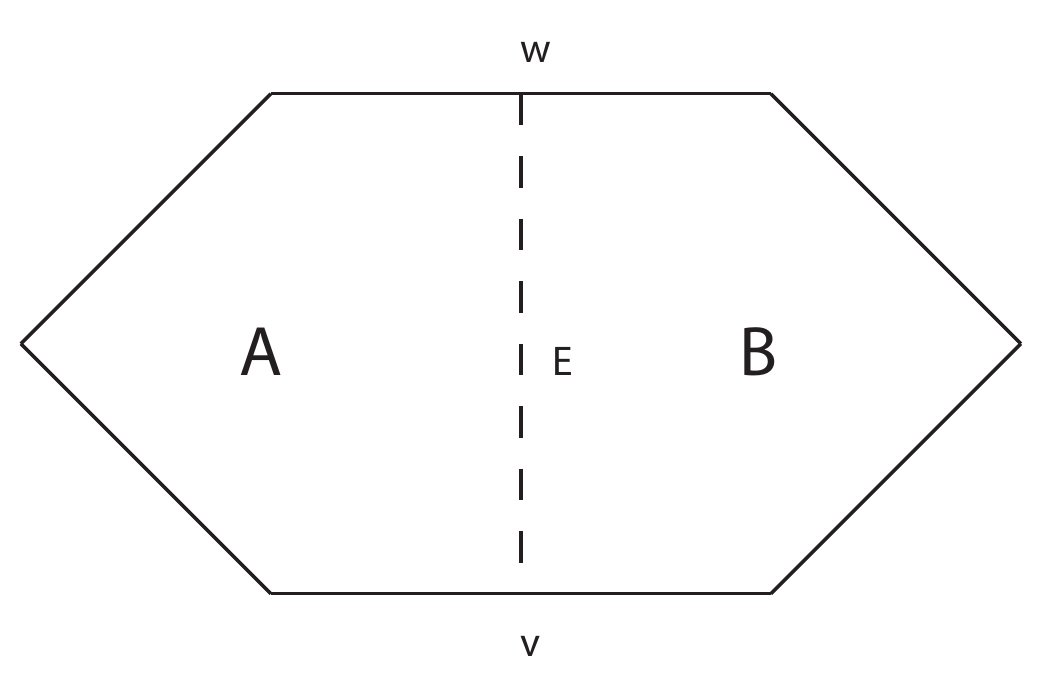}}
\caption{A loaded pair.}\label{LabeledLoadedPair}
\end{center}
\end{figure}

\begin{figure}
\begin{center}
\scalebox{.6}{\includegraphics{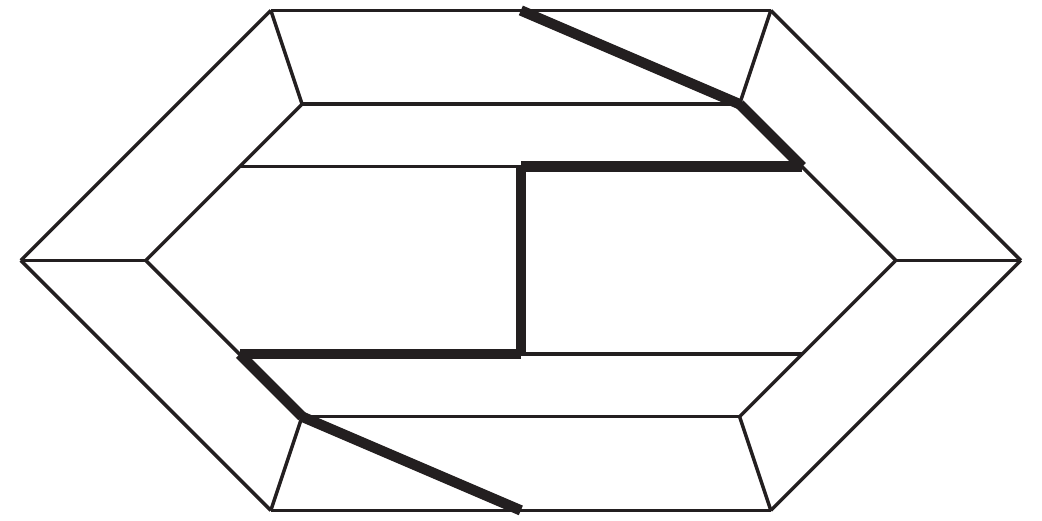}}
\caption[Finding a subdivision rule for the dodecahedral loaded pair.]{We replace the edge E from figure \ref{LabeledLoadedPair} with the path in bold. Note that the initial and terminal segments were not originally in $P^*$.}\label{DodecaPairAddedLines}
\end{center}
\end{figure}

If they meet in different points, then we connect those points by a path consisting entirely of interior edges. If no such path exists, then the set of interior edges is disconnected, which implies that there must be a tile that separates the two points (i.e. the complement of the separating tile is disconnected, with each component of the complement containing interior edges of the separating tile). Because valence is three, that tile must intersect the loaded pair $A\cup B$ in disjoint edges. However, this is impossible by the spread out condition, recall Definition \ref{SpreadOut}.

Thus, we can find a path $\alpha$ in $P^*\subset S(n+1)$ that we can identify with the loaded edge $E \subset A \cup B \subset S(n)$. This makes the replacement rule for a loaded pair into a subdivision rule for each half of the loaded pair, just as in Figure \ref{TorusAddedLines} and Figure \ref{DodecaPairAddedLines}.

Note that, in finding a replacement for $E$ in $P^*$, we only changed the cell structure near $v$ and $w$, by adding one extra line to the tiles containing them; everywhere else, we used the existing cell structure of $P^*\subset S(n+1)$. Each tile containing $v$ or $w$ in $S(n+1)$ is part of a loaded star that will be replaced by a single polyhedron in $S(n+2)$ (recall Figure \ref{VertexProgression}). Since we treated each loaded pair the same way, we see that every loaded star in $S(n+1)$ consists of three tiles around a vertex, each tile sharing a loaded edge with each neighbor and each tile having an added line from the center vertex to an outside, unburdened vertex (as shown in Figure \ref{TorusSubs} and Figure \ref{DodecaTriple}). This is case III. Note that all other tiles in $P^*\subset S(n+1)$ correspond to case I or case II.

\begin{figure}
\begin{center}
\scalebox{.6}{\includegraphics{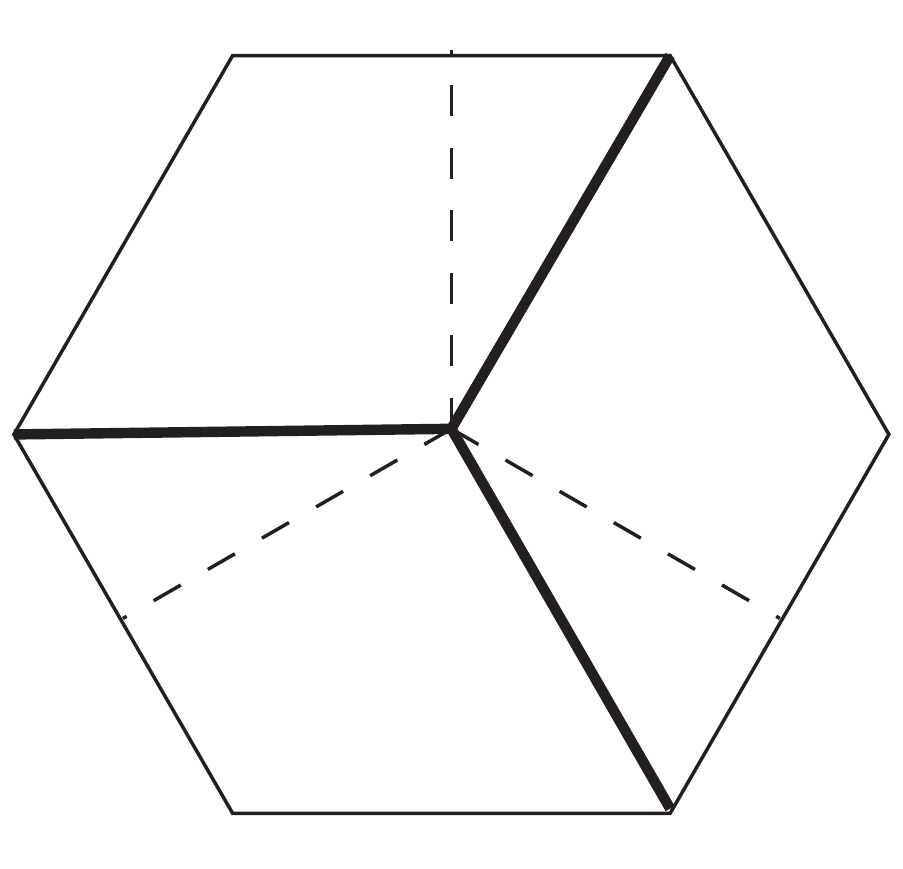}}
\caption{All loaded stars have three loaded lines and three added lines from the previous stage.}\label{DodecaTriple}
\end{center}
\end{figure}

\textbf{Case III: Replacing a loaded star in $S(n)$ with three added radial lines ending at boundary vertices.}

To find a subdivision rule for these loaded stars, we must find or introduce all six of these edges in the new cell structure (which we again call $P^* \subset S(n+1)$). The method we will use is complicated, but works for all right-angled hyperbolic 3-manifolds (it won't work for the three torus example we did earlier). Call the three loaded edges $E_1, E_2$ and $E_3$, and call the added edges $A_1$, $A_2$, and $A_3$. Call the center vertex $V$.

We start by finding lines in $P^*$ that represent the three loaded lines $E_1$, $E_2$, and $E_3$. Each of these lines ends at a vertex $v_1, v_2$ and $v_3$ on the boundary of the star in $S(n)$. As we glue a new polyhedron on to form $S(n+1)$, none of these vertices will have new interior edges coming off of them (they are loaded vertices; see Figure \ref{VertexProgression}). So, we will add edges to $P^*$ to make a path $\alpha$ in $S(n+1)$ that connects $v_1$ and $v_2$. We begin such a path by adding edges $\alpha_1, \alpha_2$ from $v_1$ and $v_2$ to the midpoints of interior edges $e_1, e_2$ (we abuse terminology by referring to an arbitrary interior point of an edge as the `midpoint' of an edge, to avoid the overuse of the word interior). We then connect the endpoints of $\alpha_1, \alpha_2$ by a series of added edges $\alpha_3,...,\alpha_n$, each of which goes from the midpoint of one interior edge $e_i$ in $P^*$ to the midpoint of another, interior edge $e_j$ in $P^*$. We define the path $\alpha$ to be $\cup \alpha_i$.

Note that we can assume that the $e_i$ are disjoint, because if they are not disjoint, we can shorten the path $\alpha$ (see Figure \ref{ShorteningAlpha}). Similarly, we can assume that no tile contains more than one $\alpha_i$. Finally, we can assume that $\alpha$ does not intersect any boundary tiles (including their interior edges) except for the two tiles containing $\alpha_1$ and $\alpha_2$. We can do this because we claim that the complement of the boundary tiles in $P^*$ is connected. To prove this claim, note that the only way the complement could be disconnected would be for two boundary tiles to intersect in an interior edge. However, this would imply that our polyhedron contains a four cycle (four faces which intersect cyclically in four disjoint edges), which is impossible by Andreev's theorem \cite{Andreev}. See Figure \ref{NoFourCycles}. This is where the proof fails for non-hyperbolic manifolds such as the 3-torus.

\begin{figure}
\begin{center}
\scalebox{.6}{\includegraphics{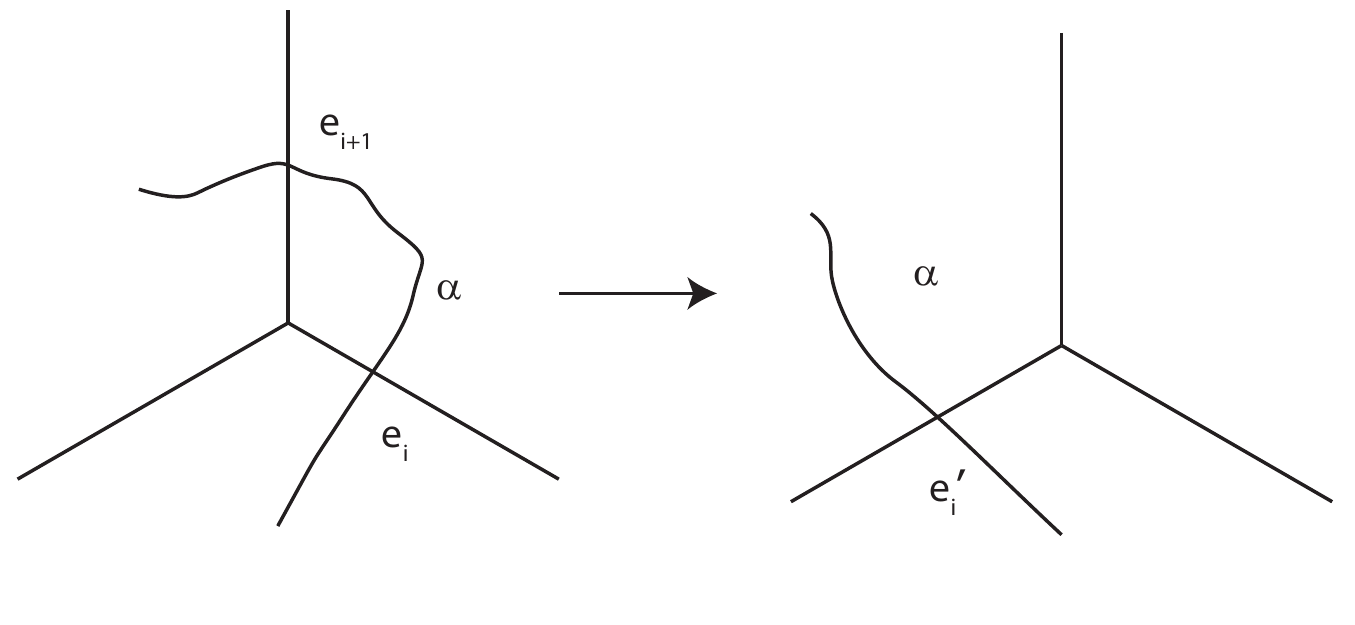}}
\caption{A path that goes through non-disjoint edges can be shortened.}\label{ShorteningAlpha}
\end{center}
\end{figure}

\begin{figure}
\begin{center}
\scalebox{.6}{\includegraphics{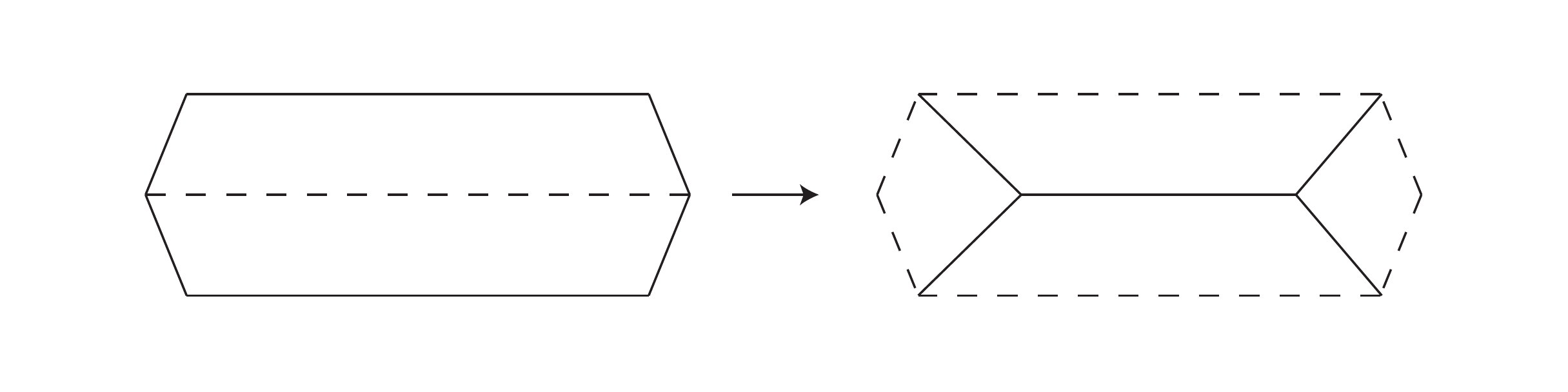}}
\caption{If the complement of the boundary tiles is disconnected, there must be a four cycle.}\label{NoFourCycles}
\end{center}
\end{figure}

Thus, as said above, we can connect $v_1$ to $v_2$ by a path $\alpha$ which is formed from new edges, each joining two disjoint edges in a tile, except for the initial and terminal segments, with no tile containing two new added edges. Now, we form another path $\beta$ with the same properties, but now connecting $v_3$ to $\alpha$. Again, we start the path by adding an edge from $v_3$ to an interior edge, and then continue the path by adding line segments containing disjoint edges until we reach an edge $e$ of an interior tile $T$ containing $\alpha$. The terminal segment connects $e$ to the midpoint of the segment of $\alpha$ contained in $T$. See Figure \ref{AlphaAndBetaPaths}.

\begin{figure}
\begin{center}
\scalebox{.6}{\includegraphics{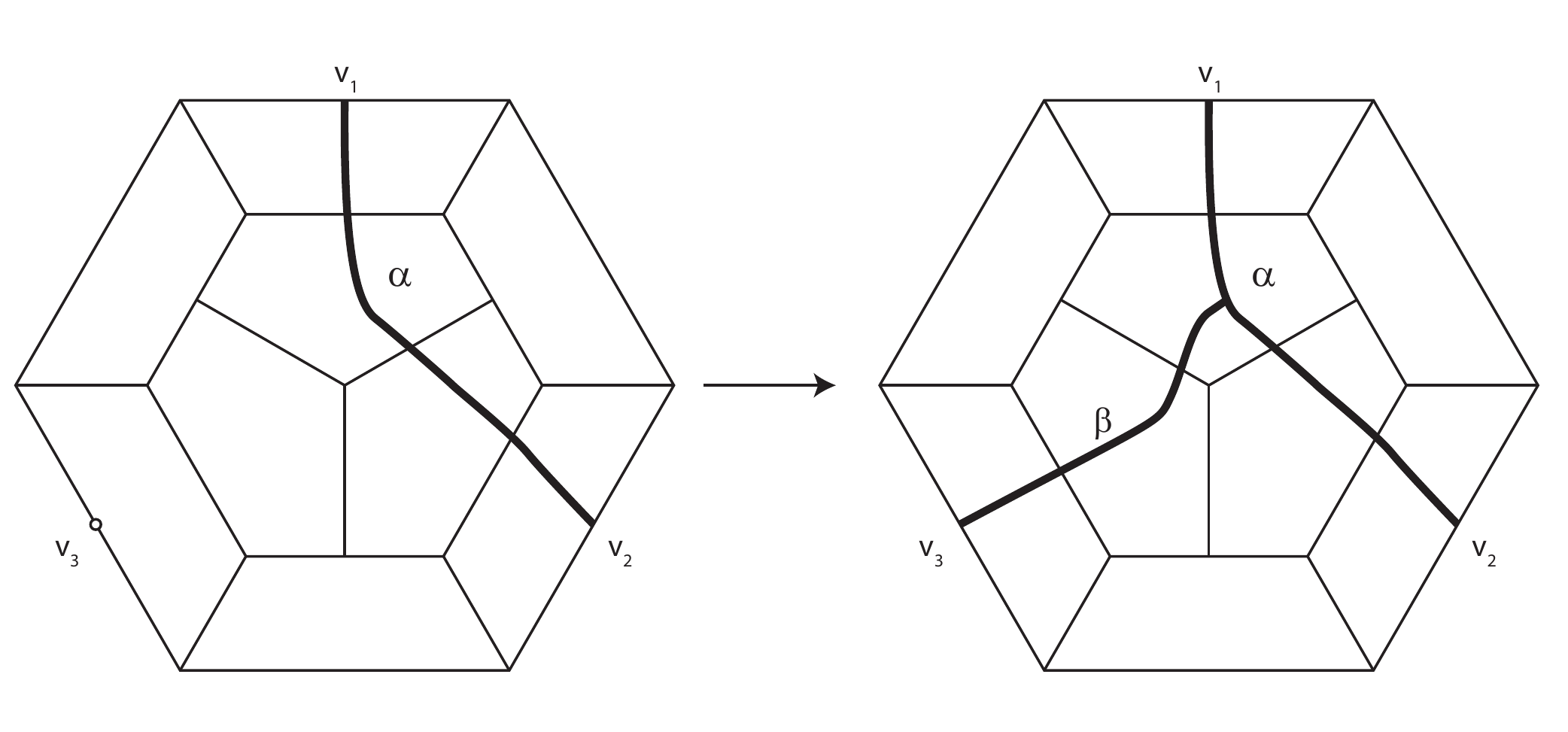}}
\caption{We find paths $\alpha$ and $\beta$ to replace the loaded edges of the loaded star.}\label{AlphaAndBetaPaths}
\end{center}
\end{figure}

Thus, we can add $\alpha\cup\beta$ to $P^*$ and identify $\alpha\cup\beta \subseteq S(n+1)$ with $E_1 \cup E_2 \cup E_3 \subset S(n)$. We identify $\alpha\cap\beta$ with the vertex $V=E_1\cap E_2 \cap E_3$. We have now replaced all loaded edges.

Now, we replace $A_1, A_2,$ and $A_3$, which, as we recall, represent the lines added in Case II. They intersect the boundary of our star in $S(n)$ in vertices $w_1, w_2,$ and $w_3$, respectively. Each vertex $w_i$ lies in a distinct component $C_i$ of $P^* \setminus \{\alpha \cup \beta \cup T^o\}$, where $T^o$ is the interior of the tile containing $\alpha \cap \beta$. We claim that the set of interior edges of each $C_i$ is connected. To see this, note that any gap in the interior edges (i.e. having more than one connected component of edges) would be caused by a single tile of $P^*$ being crossed by $\alpha \cup \beta$ in two or more disjoint segments. However, by construction, this never occurs, except in $T$. Thus, the set of interior edges in each $C_i$ is connected.

Also, each component $C_i$ must contain a vertex of $T_i$, since $\beta$ does not enter $T$ by the same edge as $\alpha$. Thus, we can find a path $\gamma_i$ from each $w_i$ to a vertex $t_i$ of $T$. Extend each $\gamma_i$ by an edge from $t_i$ to $V$ in $T$. See Figure \ref{GammaPaths}. We now identify each $\gamma_i$ with the appropriate $A_i$. Thus, by adding $\alpha, \beta, \gamma_1, \gamma_2$, and $\gamma_3$ to $P^*$, we can embed the cell structure of the loaded star in $S(n)$ into the cell structure of the replacement in $S(n+1)$.

\begin{figure}
\begin{center}
\scalebox{.6}{\includegraphics{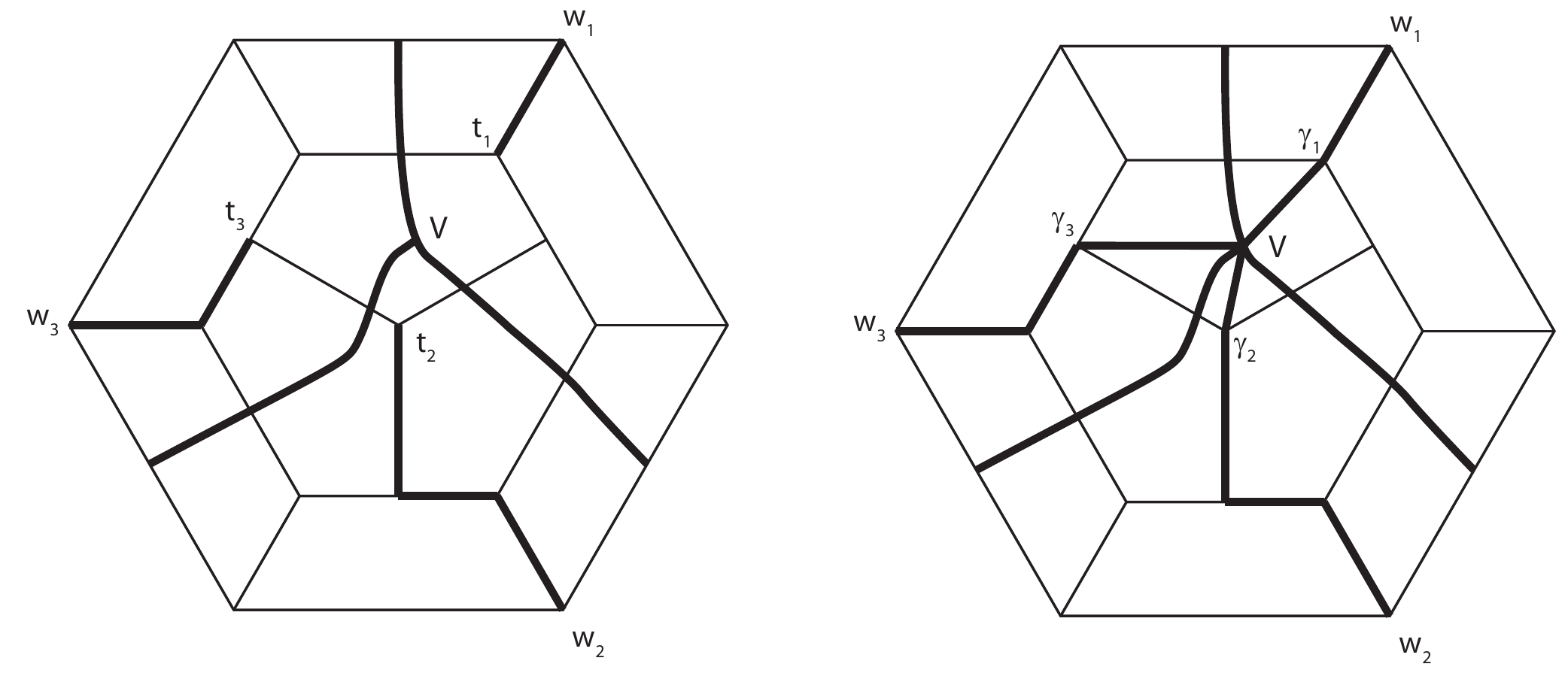}}
\caption{We find paths $\gamma_1, \gamma_2,$ and $\gamma_3$ to replace the three added lines of the loaded star.}\label{GammaPaths}
\end{center}
\end{figure}

However, this creates three new tile types: unloaded tiles with a line added connecting midpoints of disjoint edges, loaded stars with three added radial lines that may now end at midpoints of edges instead of vertices, and an unloaded tile with six added edges (three intersecting the boundary in vertices, three intersecting in midpoints of edges, the two types alternating around the boundary). These are cases $IV$, $V$, and $VI$, respectively.

\textbf{Case IV: Replacing a single unloaded tile with an added line connecting midpoints of edges in $S(n)$.}

Call the added line $A\subseteq S(n)$. Let $e_1$ and $e_2$ be the two boundary edges connected by $A$. Then in $S(n+1)$, we can add an edge going from the midpoint of each $e_i$ to an interior vertex. We can then connect these two new edges by a path of interior edges. This creates a new case, Case VII, which consists of a loaded pair with one or two added lines going from a boundary vertex to the midpoint of the loaded edge of the loaded pair. See Figure \ref{MidpointToMidpoint}.

\begin{figure}
\begin{center}
\scalebox{.6}{\includegraphics{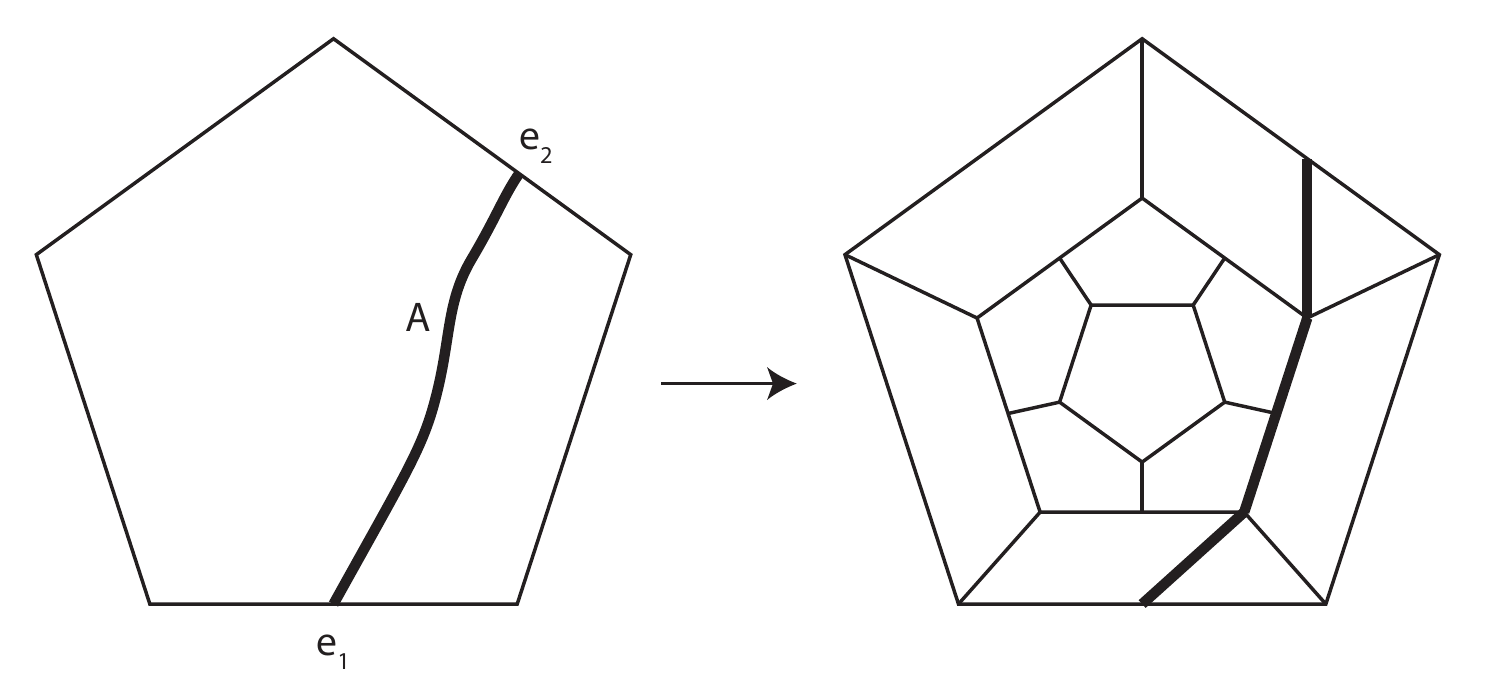}}
\caption{Replacing a single unloaded tile with an added line from midpoint to midpoint.}\label{MidpointToMidpoint}
\end{center}
\end{figure}

\textbf{Case V: Replacing a loaded star in $S(n)$ with three added radial lines, possibly terminating at midpoints of edges.}

Loaded stars of this type will be replaced almost exactly as other loaded stars were earlier; in particular, there are still three loaded edges and three added edges. We still find paths $\alpha$ and $\beta$ to replace all added edges. However, these paths cannot consist entirely of interior edges of $P^*$, as the midpoints of boundary edges are not connected to interior edges. Thus, we begin $\alpha$ and $\beta$ by adding lines from each of these boundary midpoints to an interior vertex. We then extend $\alpha$ and $\beta$ as needed by interior edges, and the proof goes through in exactly the same way. This creates no new tile types. See Figure \ref{TripleWithMidpoints}.

\begin{figure}
\begin{center}
\scalebox{.6}{\includegraphics{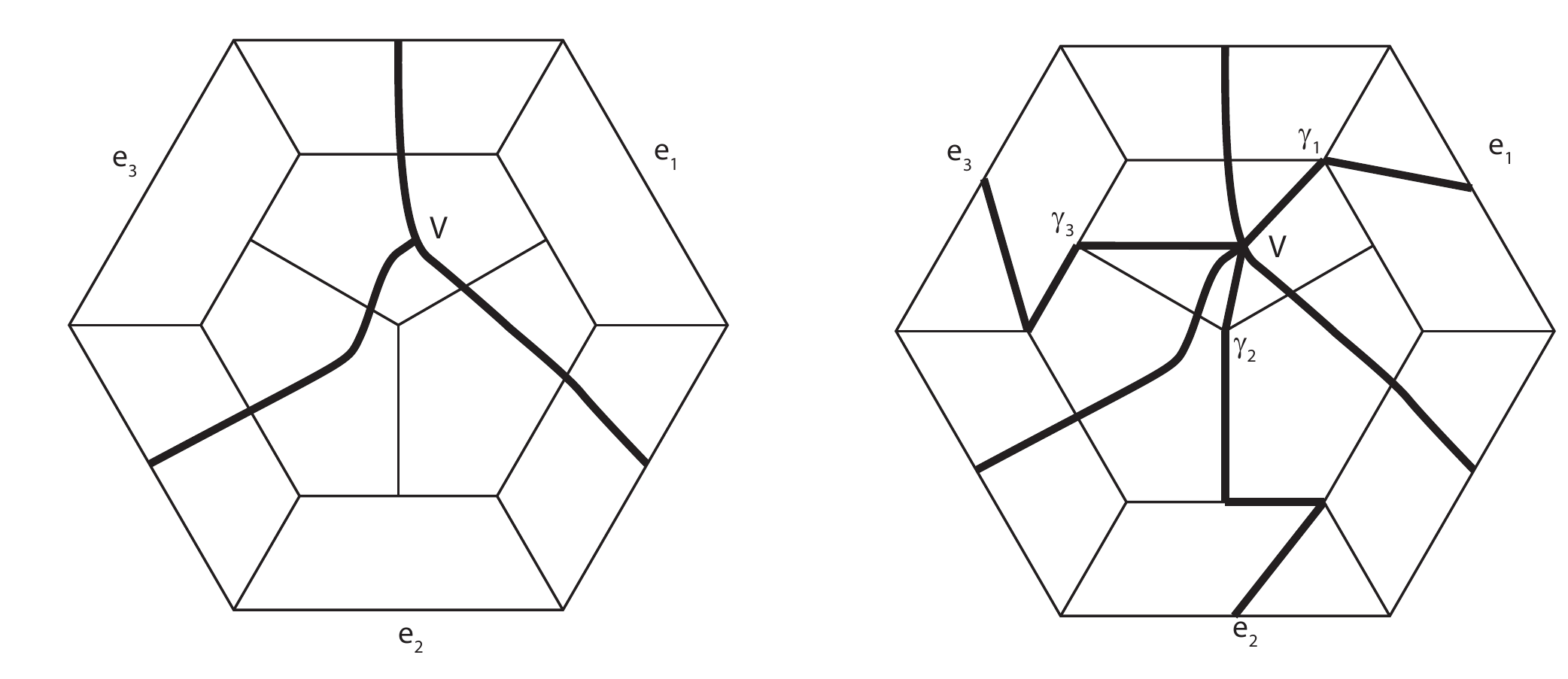}}
\caption{Replacing a loaded star with added edges going to midpoints instead of boundary vertices.}\label{TripleWithMidpoints}
\end{center}
\end{figure}

\textbf{Case VI: Replacing a single unloaded tile with six added radial lines (every other one going to a boundary vertex, the rest to midpoints of edges) in $S(n)$.}

In this case, we are basically doing Case III for a single tile instead of a loaded star. The complement of the boundary in $(S(n+1)$ is still connected, because we would have three-cycle if it were not. So let $A_1, A_2$ and $A_3$ be the added lines ending at vertices $v_1, v_2$ and $v_3$, and let $E_1$, $E_2$ and $E_3$ be the added lines ending at edges $e_1$, $e_2$, and $e_3$.

As a modification of Case III, we can connect the midpoints of $e_1$ and $e_2$ with a path $\alpha$ of added edges connecting disjoint edges of non-boundary tiles (except for the initial and terminal segment). We can similarly construct $\beta$ connecting the midpoint of $e_3$ and $\alpha$. After this, we can construct $\gamma_1, \gamma_2$ and $\gamma_3$ in exactly the same way as Case III and get a subdivision. This creates more tiles of Case IV, VI, and VII.

\begin{figure}
\begin{center}
\scalebox{.6}{\includegraphics{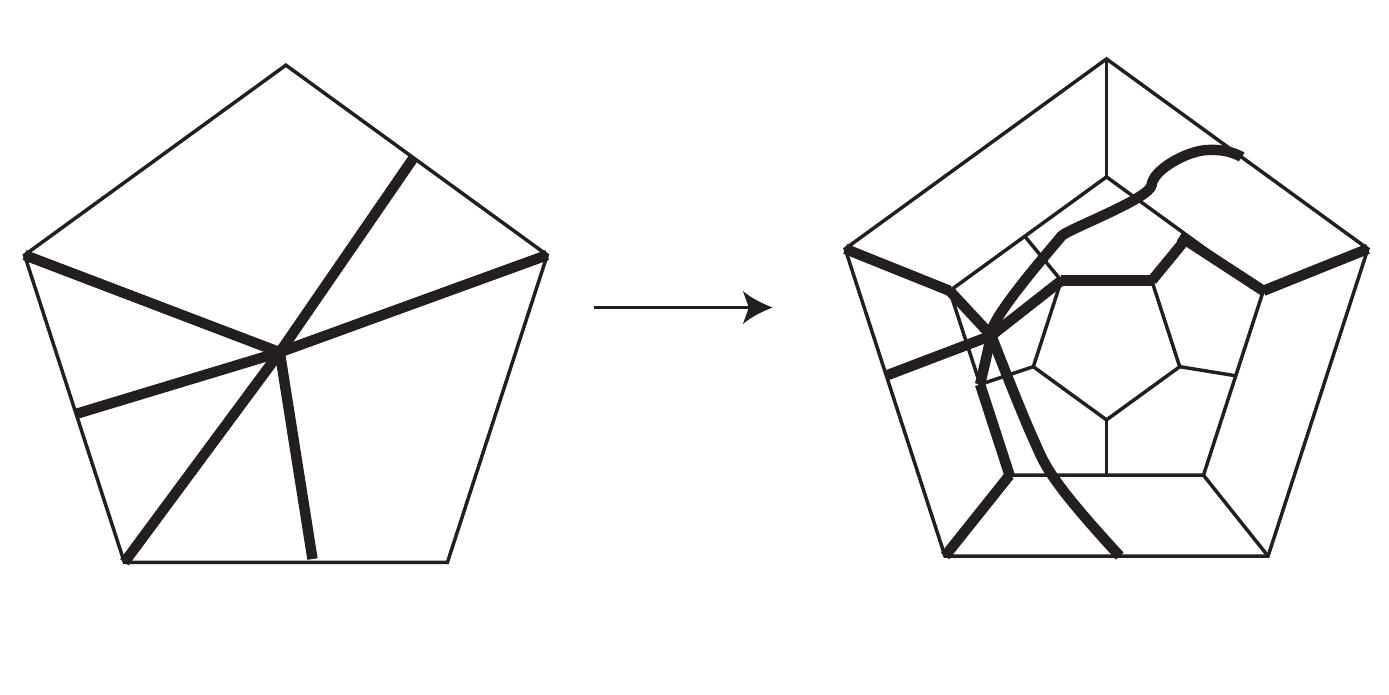}}
\caption{Replacing a single unloaded tile with six added lines.}\label{SingleWithSixAddedLines}
\end{center}
\end{figure}

\textbf{Case VII: Replacing a loaded pair with two edges coming from the boundary to the midpoint of the loaded edge of the loaded pair. $S(n)$.}

First, assume that the two added edges $A_1$, $A_2$ meeting in the midpoint of the loaded edge $E$ have their other endpoints at two boundary vertices $w_1$ and $w_2$. Call the loaded edge $E$.

Instead of replacing $E$ as in case II, we replace it by a path $\alpha$ as in Case III that travels from the endpoints of $E$ and only goes through midpoints of interior edges. This creates more tiles of Case $V$ and Case $IV$. Then connect $w_1$ and $w_2$ by a path $\gamma$ of interior edges. This creates no new cases, and $\alpha \cup \gamma$ replaces $E$ and $A_1$, $A_2$.

The case when $A_1$ and/or $A_2$ have their other endpoints at midpoints of boundary edges is similar; however, we begin and end $\gamma$ with segments attaching the midpoints of the boundary edge to an arbitrary interior vertex; the rest of gamma consists of interior edges. This creates no new cases.

\begin{figure}
\begin{center}
\scalebox{.6}{\includegraphics{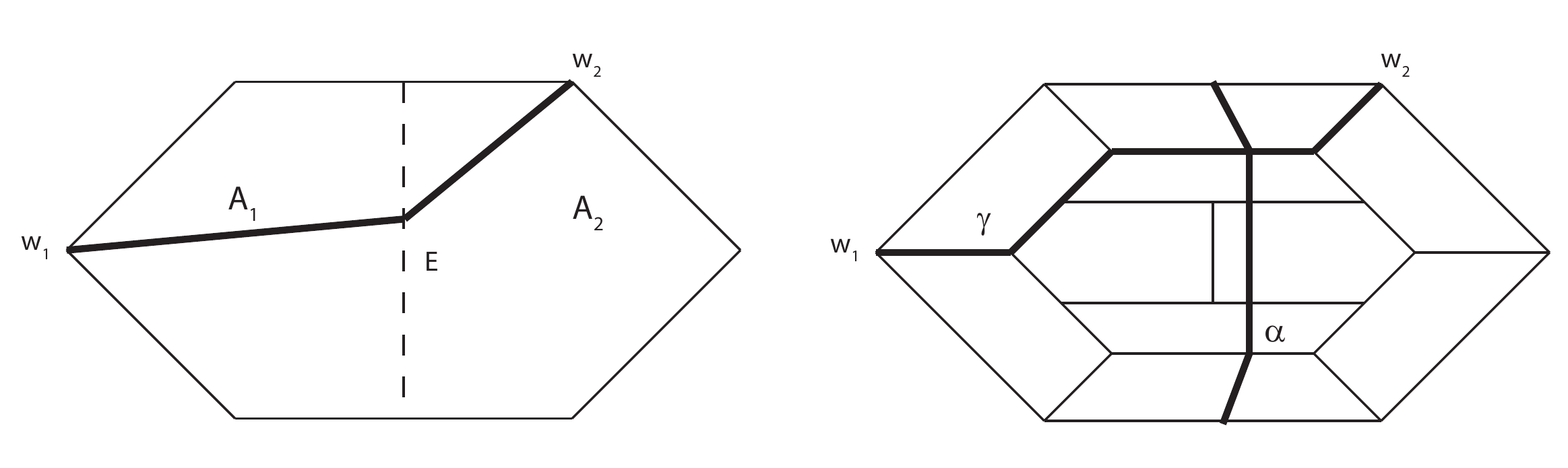}}
\caption{Replacing a loaded pair with added lines starting at the boundary and meeting in the midpoint of the loaded edge.}\label{LoadedWithMidpoints}
\end{center}
\end{figure}

\textbf{End of cases}

So note that each of our seven cases of tile types generates more tiles, but only within those seven cases. However, each tile type is a slight modification of a loaded fan, of which there are finitely many. There are also only finitely many ways to modify each of these finitely many fans (by adding lines). Thus, there are finitely many tile types.

Thus, by adding finitely many lines to the replacement tile types, we can turn the replacement rule of Theorem \ref{BigTheorem} for such a manifold into a subdivision rule with finitely many tile types.

\end{proof}

This theorem applies only to closed 3-manifolds (since the link of each vertex is a sphere). If, in some spread out polyhedral gluing with even edge cycle lengths, the valence of each polyhedron is uniformly greater than three or the edge cycle lengths are all six or larger, we can also find a subdivision rule. However, these manifolds are never closed manifolds, as they have torus or hyperbolic surface boundary. The link of each vertex in the manifold is composed of polygons with a number of edges equal to the valence of the corresponding vertex on the polyhedron; each vertex of these polygons corresponds to a an edge in the manifold, and the valence of the vertex is the edge cycle length of edge. Under these circumstances, (still assuming large valence or large edge cycle length), Chapter 9 of \cite{myself} ensures that only loaded pairs and unloaded tiles occur in the replacement rules for the manifold. In particular, we have the following:

\begin{thm}
Let $M$ be a 3-manifold created by gluing together polyhedra $P_1,...,P_n$ such that each edge cycle has even length $>2$. If each $P_i$ is spread out and:
\begin{enumerate}
\item all vertices have valence greater than 3, or
\item all edges have cycle length greater than 4, then
\end{enumerate}
the replacement rules in Theorem \ref{BigTheorem} can be made into subdivision rules by adding finitely many lines to replacement tile types.
\end{thm}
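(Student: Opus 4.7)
The strategy is to invoke the result cited from Chapter 9 of \cite{myself}: under either hypothesis (1) or (2), only unloaded tiles and loaded pairs arise as loaded fans in the replacement rule of Theorem \ref{BigTheorem}. In other words, the hypotheses prevent loaded stars from ever forming. This immediately eliminates Case III -- and therefore the derived Cases IV--VII -- from the proof of Theorem \ref{SubdivisionTheorem}, so only adapted versions of Cases I and II need to be handled.

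Case I (unloaded tile) is immediate, exactly as before: the replacement is a refinement of the tile, no boundary edges are identified, and the result is automatically a subdivision.

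For Case II (loaded pair), I would reuse the construction from the proof of Theorem \ref{SubdivisionTheorem} verbatim. Given a loaded pair $A\cup B$ with loaded edge $E$ joining vertices $v, w$ covered by replacement polyhedron $P^*$, add short edges from $v$ and $w$ to nearby interior vertices of $P^*$, then connect these two interior endpoints by a path of interior edges of $P^*$. Such a connecting path exists because the only obstruction would be a tile of $P^*$ separating the two interior endpoints and meeting $A\cup B$ in disjoint edges, which is forbidden by the spread-out condition (Definition \ref{SpreadOut}). Identifying the resulting path with $E$ promotes the replacement to a subdivision of $A\cup B$.

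The main obstacle is verifying that only finitely many tile types arise. Each application of Case II introduces one extra radial edge from $v$ (and from $w$) into the previously unloaded tile of $P^*$ containing that vertex, producing ``modified'' unloaded tiles with an extra boundary vertex and interior edge. In the closed case these modifications eventually cascade into loaded stars with three added radii (Case III), which forces Cases IV--VII. In our setting, because loaded stars never appear, the cascade is truncated: at any vertex $v$, only a bounded number of adjacent loaded pairs can successively contribute added radii before $v$ is completely covered by polyhedra, and each added radius is a bounded local modification to one of the finitely many combinatorial fan types on $P_1,\dots,P_n$. A case analysis paralleling Cases IV--VII of Theorem \ref{SubdivisionTheorem}, but with the loaded-star feedback loop removed, shows that the family of modified tile types closes up into a finite collection. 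This produces the desired subdivision rule with finitely many tile types.
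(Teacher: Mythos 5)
Your first step is exactly the paper's: invoke Chapter 9 of \cite{myself} to conclude that under either hypothesis only unloaded tiles and loaded pairs occur, so the loaded-star machinery (Cases III--VII) never enters. But your treatment of the loaded pair has a genuine gap. You import the Case II construction from Theorem \ref{SubdivisionTheorem} verbatim, adding new radial edges from the endpoints $v$ and $w$ of the loaded edge to interior vertices of $P^*$. Those added edges are precisely what created the cascade of modified tile types in the right-angled closed case, and your argument that the cascade now ``closes up into a finite collection'' is only asserted: the added radius lands inside some tile of $S(n+1)$, and when that tile is later replaced you must again carry the added line forward, which is exactly the Case IV--VII style bookkeeping you claim to have removed. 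Saying ``only a bounded number of adjacent loaded pairs can contribute added radii before $v$ is covered'' does not address this, because the problem is not accumulation at $v$ but the propagation of the added line through subsequent replacements.

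The paper avoids this entirely by using a second consequence of Theorem 12 of \cite{myself}: every loaded vertex in the universal cover of the vertex link is replaced by one or more \emph{unloaded} vertices, which translates to the statement that both endpoints of a loaded edge in $S(n)$ acquire new interior edges in $S(n+1)$ (this is exactly what fails in the valence-three right-angled setting, where loaded vertices have no new interior edges; recall Figure \ref{VertexProgression}). Consequently the path replacing the loaded edge $E$ can be taken to consist \emph{entirely of existing interior edges} of $P^*$, with existence guaranteed by the spread-out condition as you say; no lines are added at $v$ or $w$, so no new tile types arise at all and finiteness is immediate. Without that observation your construction introduces modified tiles whose finiteness you have not established, so the key finiteness claim of the theorem remains unproven in your proposal.
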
\label{BoundaryTheorem}

\begin{proof}
Recall that the only tile types in the replacement rule from Theorem \ref{BigTheorem} are loaded fans.

The link of every vertex in our manifold is a Euclidean or hyperbolic surface, by an Euler characteristic argument. As we construct the universal cover for our manifold, we also construct, at each vertex, the universal cover for the link. Each vertex of the link corresponds to an edge of the manifold, and each edge of the link corresponds to a face of the manifold. These surfaces have replacement rules as well, with `loaded vertices' corresponding to loaded edges in the manifold. Such replacement rules were studied in Chapter 9 of \cite{myself}. According to the proof of Theorem 12 of that chapter, all surfaces of sufficient size (including those coming from manifolds satisfying the conditions of this theorem) have a replacement rule in which only isolated loaded vertices occur (i.e. no two loaded vertices are ever adjacent). Since loaded vertices in the surface correspond to loaded edges in the manifold, this shows that every loaded edge must be bordered by unloaded edges to either side. This shows that only loaded pairs and unloaded tiles can occur.

Thus, if we can replace the loaded edge in every loaded pair, we will be done. However, the proof of Theorem 12 of \cite{myself} also shows that every loaded vertex in the universal cover of the surface is replaced by one or more unloaded vertices. This implies that, in the universal cover of the manifold, the two vertices of each loaded edge in $S(n)$ will both have new interior edges coming from them in $S(n+1)$. We can construct a path in $S(n+1)$ between these two vertices consisting entirely of interior edges, because the only obstacle to such a path would be a tile that separates the loaded pair, which cannot occur by the spread out condition (Definition \ref{SpreadOut}). No new tile types are created by finding such a path, and so we have a subdivision rule.
\end{proof}

As you can see, it is actually easier to find a subdivision rule for 3-manifolds with boundary. Subdivision rules for alternating links (which have valence four, edge cycle length four polyhedral gluings) were found previously in \cite{linksubs}. Also, note that the full `spread out' condition is not needed; we need only check that no tile has disconnected intersection with single tiles or loaded pairs, or intersects in more than two edges.

The tilings in Figures \ref{CircleBorro} to \ref{CircleCubeBig}, and Figures \ref{CircleDod} and \ref{CircleDodBig} were all created by the methods detailed in Theorems \ref{SubdivisionTheorem} and \ref{BoundaryTheorem}. The pictures were created using Ken Stephenson's Circlepack \cite{Circlepak} and software by Bill Floyd \cite{floyd}.

Figures \ref{CircleBorro} and \ref{CircleBorroBig} represent the Borromean rings, a manifold created from two octahedra, each with edge cycle lengths of 4. It is a finite-volume hyperbolic manifold.  Figure \ref{CircleTet} is another finite-volume hyperbolic manifold: the figure-eight knot.

Figures \ref{CircleTorus}-\ref{CircleCubeBig} are all created from cubes. The first figure is the 3-dimensional torus, with edge cycle lengths of 4. It is a Euclidean manifold. The last two figures are a cube with edge cycle lengths of six. It has torus boundary.

Finally, Figures \ref{CircleDod} and \ref{CircleDodBig} show the tilings from a subdivision rule created from the dodecahedral orbifold first studied by Cannon, Floyd and Parry by an adaptation of our method. This is a dodecahedron with edge cycle length four. It is interesting to compare this to their original subdivision rule \cite{subdivision}. The tiles are shown in Figure \ref{Dodecasubs2}.

\begin{figure}
\begin{center}
\scalebox{.5}{\includegraphics{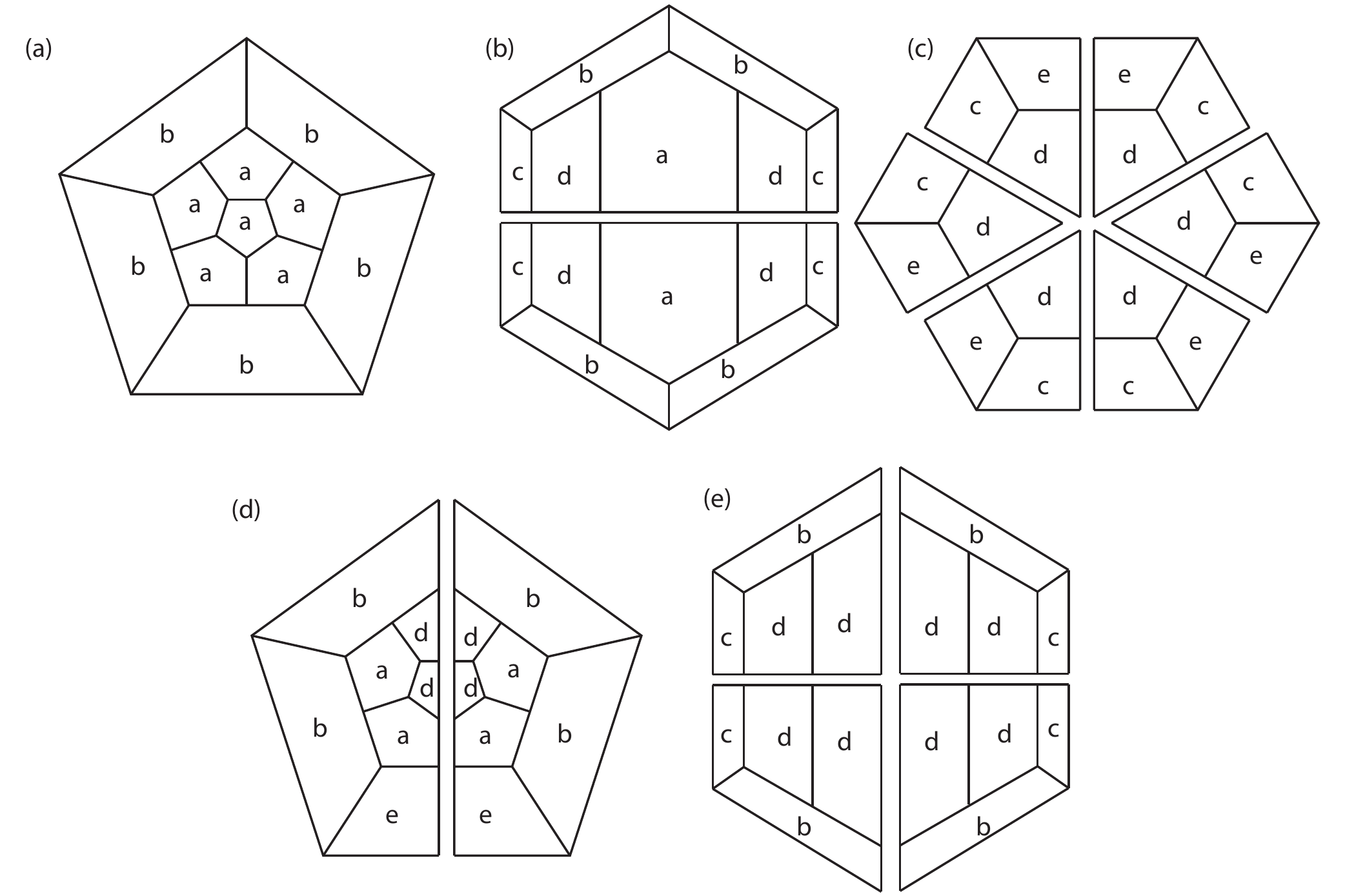}}
\caption[A subdivision rule for a dodecahedral orbifold.]{A subdivision rule obtained from the replacement rule in Figure \ref{Dodecasubs}. We use a slightly different method than the one outlined in Theorem \ref{SubdivisionTheorem} to reduce the number of tile types.}\label{Dodecasubs2}
\end{center}
\end{figure}

Closed hyperbolic manifolds with even edge cycle length are easy to create: start with a set of glued-together polyhedra with large valence at each vertex and even edge cycle length greater than 2. These manifolds will usually have hyperbolic surfaces at the boundary. If we expand each vertex into a face, we get a subdivision rule with valence three, and certain faces that never subdivide (these represent the boundary at infinity). If we double the manifold over its boundary, all edges of the blown-up vertices will have edge-cycle length four, and the other edges retain their original edge cycle length. This gives a closed hyperbolic manifold with even edge-cycle length, as desired.

As a final note, odd edge-cycle length polyhedra are more difficult to work with, especially length 3, but many of the same principles apply. One open problem is to find a concise set of conditions on odd edge-cycle length polyhedral gluings that ensures a subdivision rule exists. This is interesting, because all 3-manifolds have a decomposition into valence 3, edge-cycle length 3 polyhedra (by virtue of the Heegard splitting). It is unknown how many hyperbolic 3-manifolds have a decomposition into even edge-cycle length polyhedra.

\begin{figure}
\begin{center}
\scalebox{0.8}{\includegraphics{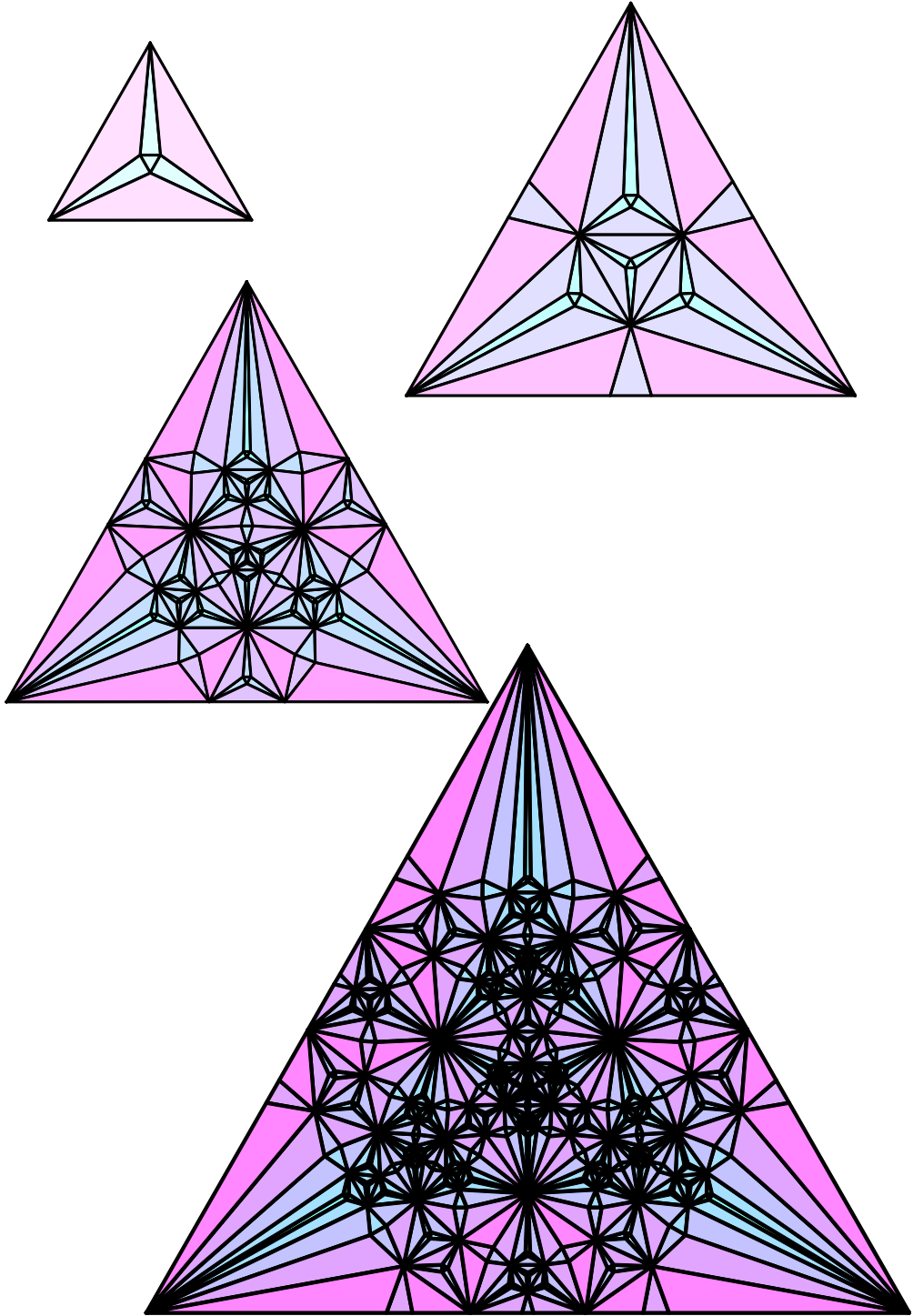}}\caption{The first few subdivisions of the Borromean rings complement, which decomposes into two octahedra with edge cycle length 4.}
\label{CircleBorro}
\end{center}
\end{figure}

\begin{figure}
\begin{center}
\scalebox{0.8}{\includegraphics{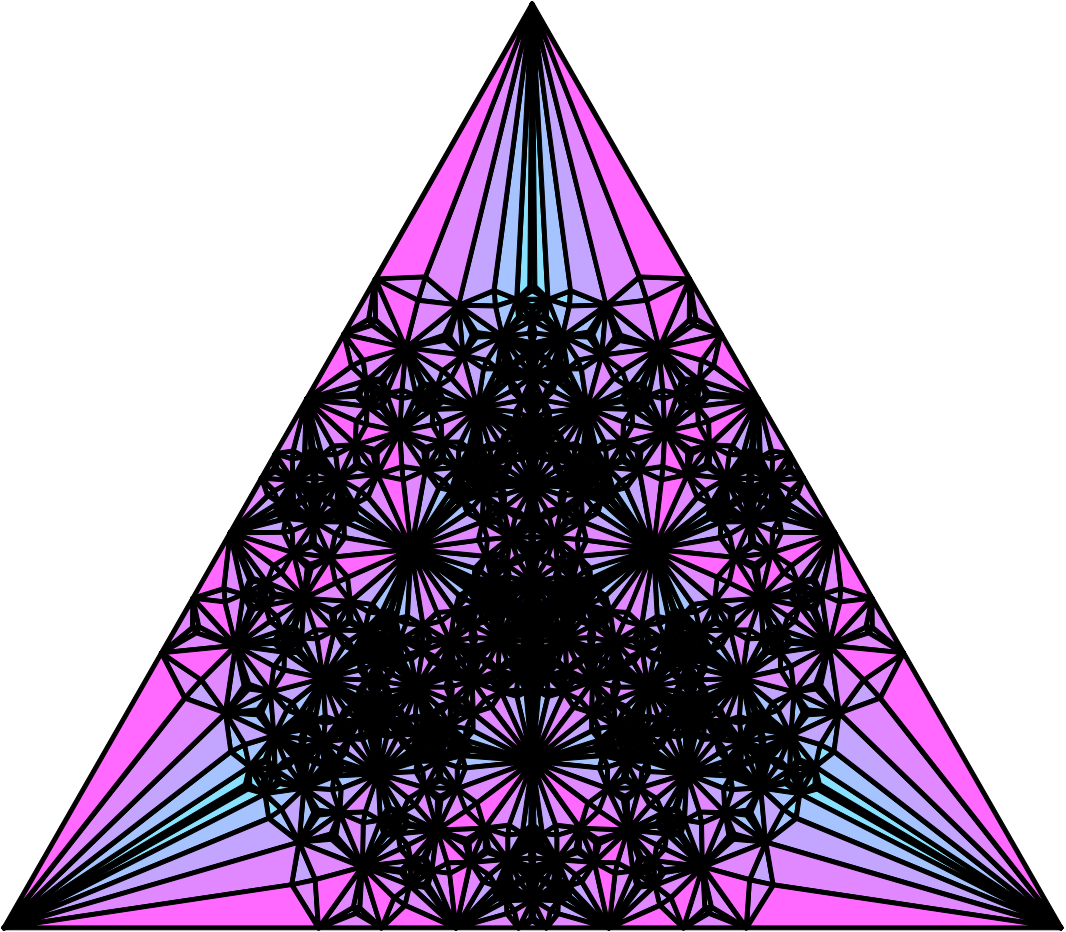}}\caption{One more subdivision of the Borromean rings complement.}
\label{CircleBorroBig}
\end{center}
\end{figure}

\begin{figure}
\begin{center}
\scalebox{.8}{\includegraphics{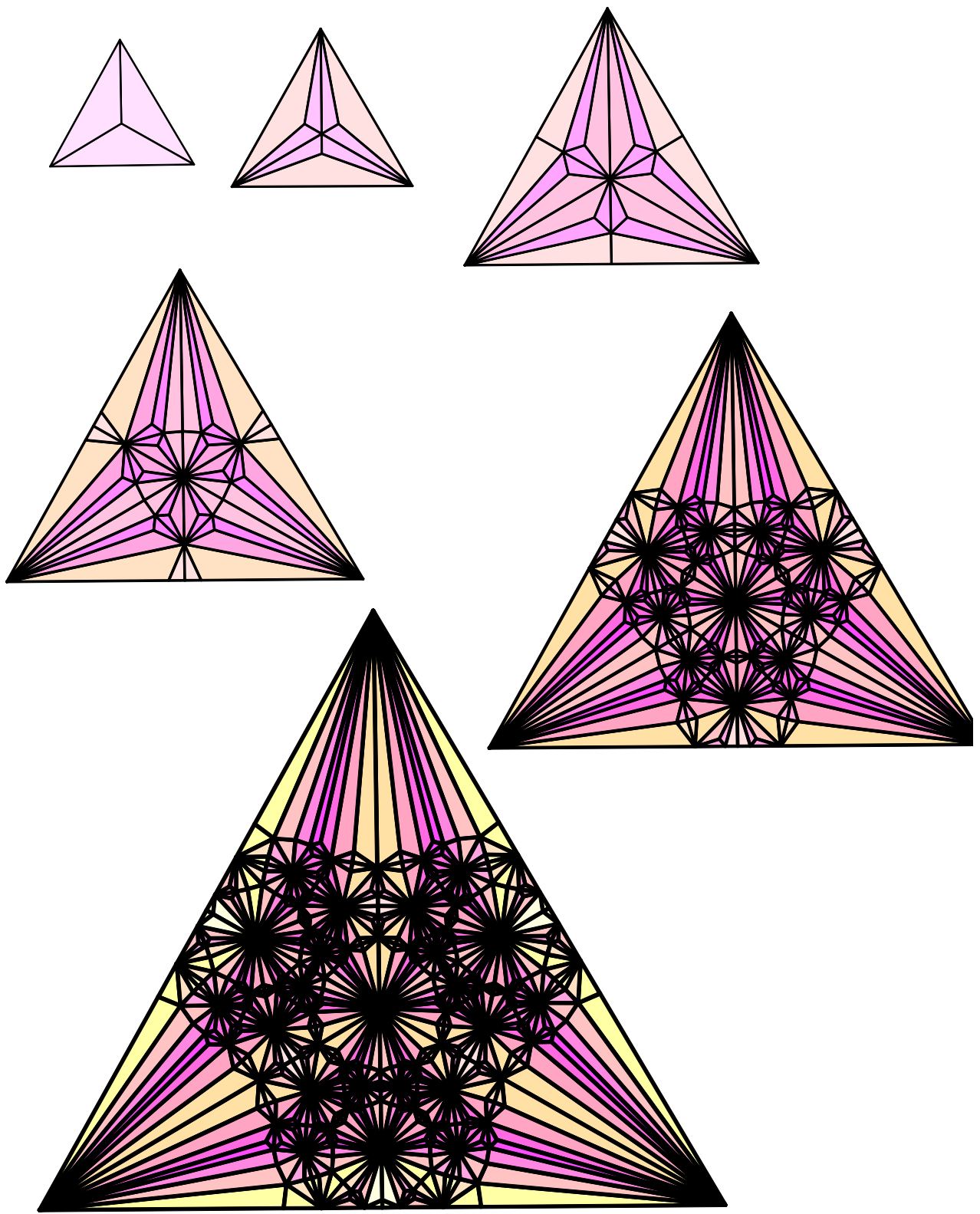}}\caption{The first few subdivisions of the figure eight knot, decomposed into two regular ideal tetrahedra.}
\label{CircleTet}
\end{center}
\end{figure}

\begin{figure}
\begin{center}
\scalebox{.8}{\includegraphics{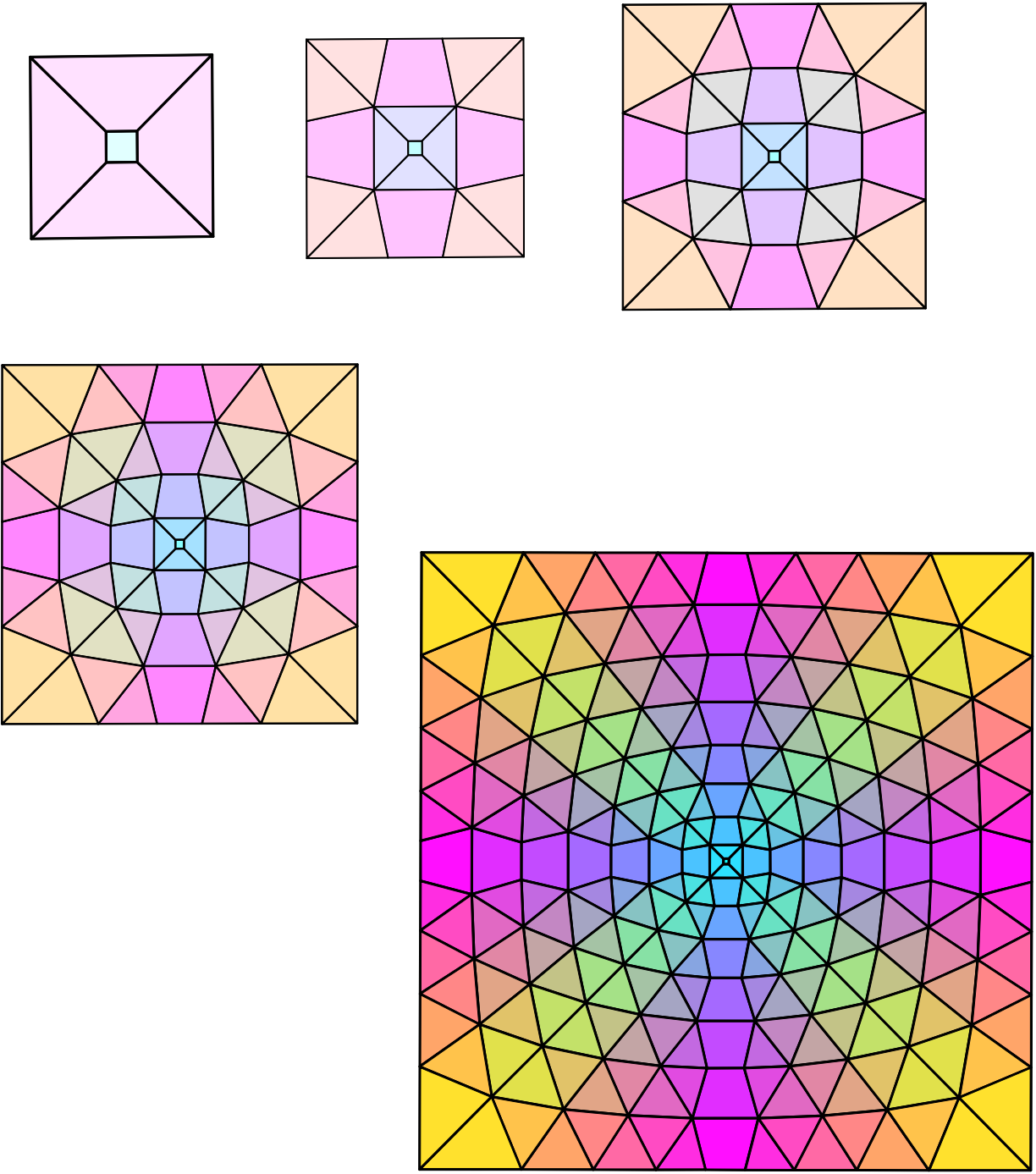}}\caption{Several subdivisions of the 3-dimensional torus, with fundamental domain a Euclidean cube.}
\label{CircleTorus}
\end{center}
\end{figure}

\begin{figure}
\begin{center}
\scalebox{.8}{\includegraphics{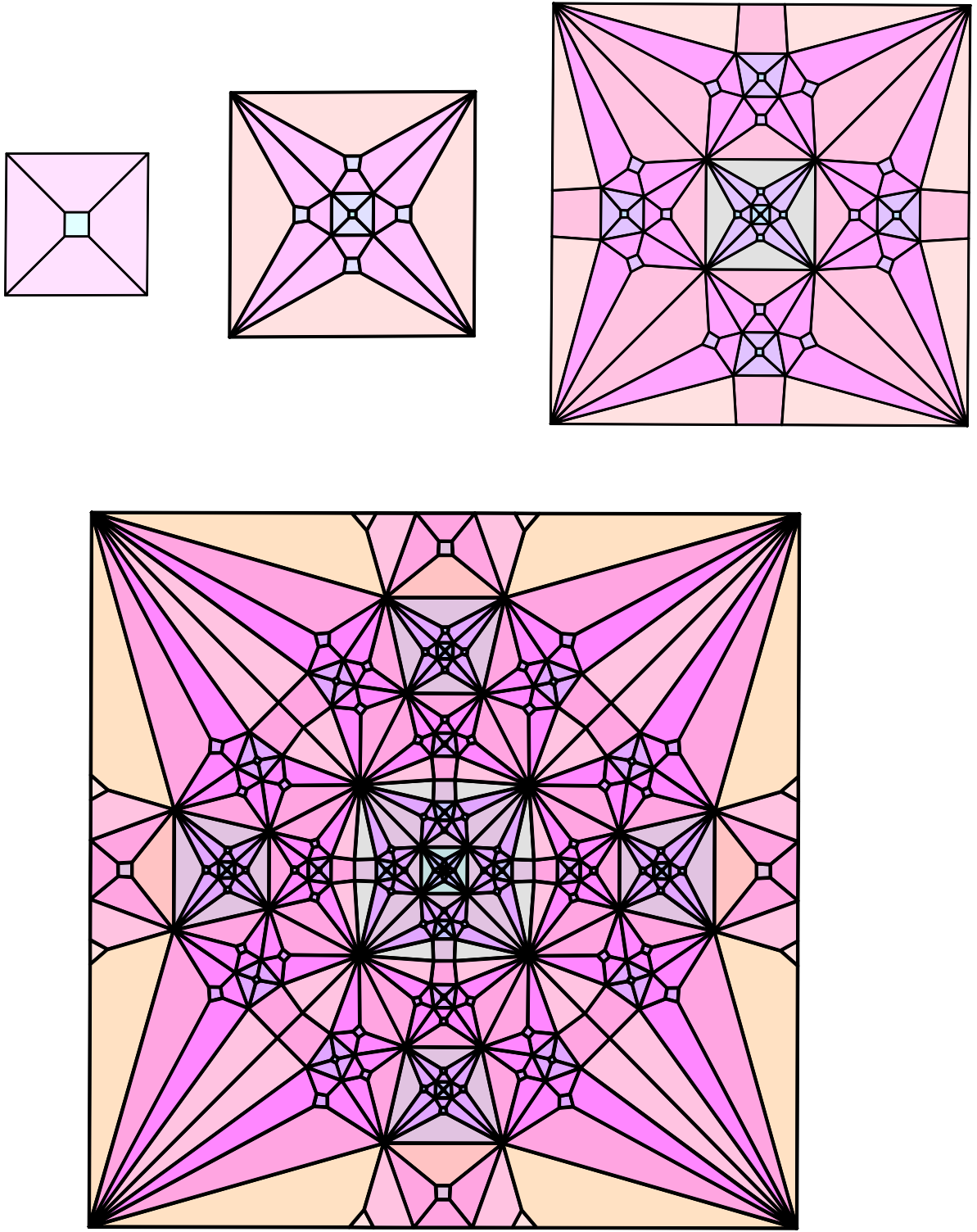}}\caption{Several subdivisions for a cube with edge-cycle length 6.}
\label{CircleCube}
\end{center}
\end{figure}

\begin{figure}
\begin{center}
\scalebox{.8}{\includegraphics{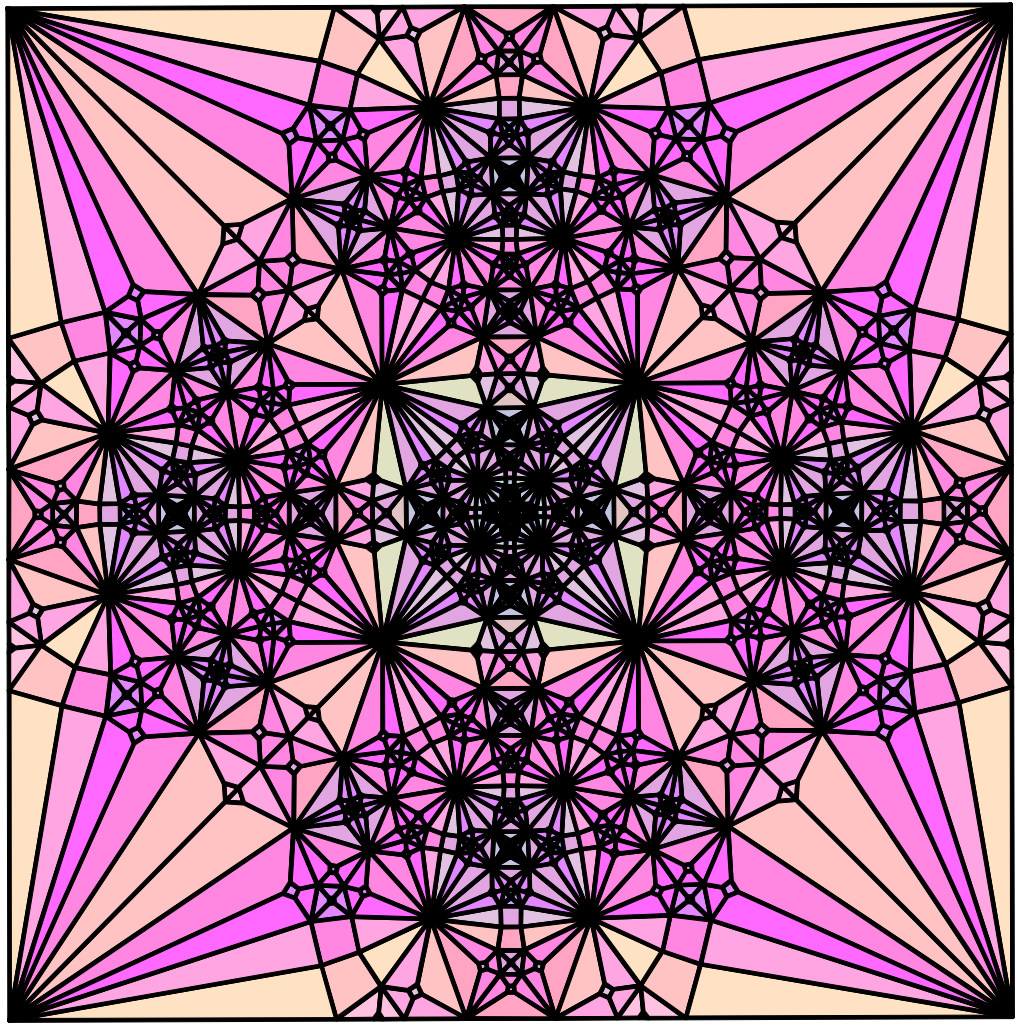}}\caption{A further subdivision of the edge-cycle length 6 cube.}
\label{CircleCubeBig}
\end{center}
\end{figure}

\bibliographystyle{plain}
\bibliography{ManiSubs}

\end{document}